\newtheorem{theorem}{Theorem}[section]
\newtheorem{lemma}[theorem]{Lemma}
\newtheorem{corollary}[theorem]{Corollary}
\newtheorem*{Var}{Varchenko's conjecture}
\newtheorem{Theorem}{Theorem}
\theoremstyle{definition}
\newtheorem{definition}[theorem]{Definition}
\theoremstyle{remark}
\newtheorem{remark}[theorem]{Remark}
\newtheorem{example}[theorem]{Example}
\begin{document}

\title[The maximum likelihood degree of a very affine variety]{The maximum likelihood degree of a very affine variety}
\author{June Huh}
\email{junehuh@umich.edu}
\address{Department of Mathematics, University of Michigan\\
Ann Arbor, MI 48109\\  USA}
\classification{14B05, 14C17, 52B40}
\keywords{maximum likelihood degree, logarithmic differential form, Chern-Schwartz-MacPherson class.}

\begin{abstract}
We show that the maximum likelihood degree of a smooth very affine variety is equal to the signed topological Euler characteristic. This generalizes Orlik and Terao's solution to Varchenko's conjecture on complements of hyperplane arrangements to smooth very affine varieties. For very affine varieties satisfying a genericity condition at infinity, the result is further strengthened to relate the variety of critical points to the Chern-Schwartz-MacPherson class. The strengthened version recovers the geometric deletion-restriction formula of Denham et al.\ for arrangement complements, and generalizes Kouchnirenko's theorem on the Newton polytope for nondegenerate hypersurfaces. 
\end{abstract}

\maketitle

\section{Introduction}

Maximum likelihood estimation in statistics leads to the problem of finding critical points of a product of powers of polynomials on an algebraic variety \cite[Section 3.3]{Pachter-Sturmfels}. When the polynomials and the variety are linear and defined over the real numbers, the number of critical points is the number of bounded regions in the corresponding arrangement of hyperplanes.

Studying Bethe vectors in statistical mechanics, Varchenko conjectured a combinatorial formula for the number of critical points for complex hyperplane arrangements \cite{Varchenko}.  Let $U$ be the complement of $n$ hyperplanes in $\mathbb{C}^r$ defined by the linear functions $f_1,\ldots,f_n$. The master function $\varphi_{\bf u}=\prod_{i=1}^n f_i^{u_i}$, where the exponents $u_i$ are integral parameters, is a holomorphic function on $U$. We assume that the affine hyperplane arrangement is \emph{essential}, meaning that the lowest-dimensional intersections of the hyperplanes are isolated points. 

\begin{Var}
If the hyperplane arrangement is essential and the exponents $u_i$ are sufficiently general, then the following hold.
\begin{enumerate}
\item $\varphi_{\bf u}$ has only finitely many critical points in $U$.
\item All critical points of $\varphi_{\bf u}$ are nondegenerate.
\item The number of critical points is equal to the signed Euler characteristic $(-1)^r \chi(U)$.
\end{enumerate}
\end{Var}

The conjecture was proved by Varchenko in the case where the hyperplanes are defined over the real numbers \cite{Varchenko}, and by Orlik and Terao in general \cite{Orlik-Terao}.
Subsequent works of Silvotti and Damon extended this result to some nonlinear arrangements \cite{Damon1,Damon2,Silvotti}. The assumption made on the arrangement is certainly necessary, for there are arrangements violating the inequality $(-1)^r \chi(U) \ge 0$.

The principal aim of this paper is to generalize the theorem of Orlik and Terao. The generalization is pursued in two directions. In Theorem \ref{main}, we obtain the same conclusion for a wider class of affine varieties than complements of essential arrangements; in Theorem \ref{proCSM}, we recover the whole characteristic class from the critical points instead of the topological Euler characteristic. A connection to Kouchnirenko's theorem on the relation between the Newton polytope and the Euler characteristic is pointed out in Section \ref{Kushnirenko}.

The above extensions are motivated by the problem of maximum likelihood estimation in algebraic statistics. Recall that an irreducible algebraic variety is said to be \emph{very affine} if it is isomorphic to a closed subvariety of an algebraic torus. Very affine varieties have recently received considerable attention due to their central role in tropical geometry \cite{EKL,Speyer,Tevelev}. The complement of an affine hyperplane arrangement is affine, and it is very affine if and only if the hyperplane arrangement is essential. Any complement of an affine hyperplane arrangement is of the form $U \times \mathbb{C}^k$, where $U$ is the complement of an essential arrangement.

In view of maximum likelihood estimation, very affine varieties are the natural class of objects generalizing complements of hyperplane arrangements. Consider the projective space with the homogeneous coordinates $p_1,\ldots,p_n$, where the coordinate $p_i$ represents the probability of the $i$-th event. An implicit statistical model is a closed subvariety $V \subseteq \mathbb{P}^{n-1}$. The data comes in the form of nonnegative integers $u_1,\ldots,u_n$, where $u_i$ is the number of times the $i$-th event was observed. 

In order to find the values of $p_i$ on $V$ which best explain the given data $u_i$, one finds critical points of the likelihood function
\[
L(p_1\ldots,p_n)= p_1^{u_1}  \cdots  p_n^{u_n} / (p_1+\cdots+p_n)^{u_1+\cdots+u_n}.
\]
Statistical computations are typically done in the affine chart defined by the nonvanishing of $p_1+\cdots+p_n$, where the sum can be set equal to $1$ and the denominator of $L$ can be ignored.  The maximum likelihood degree of the model is defined to be the number of complex critical points of the restriction of $L$ to the projective variety $V$, where we only count critical points that are not poles or zeros of $L$, and $u_1,\ldots,u_n$ are assumed to be sufficiently general \cite{Hosten-Khetan-Sturmfels}. In other words, the maximum likelihood degree is the number of critical points of the likelihood function on the very affine variety 
\[
U:=\big\{x \in V \mid p_1 \cdots p_n (p_1+\cdots+p_n) \neq 0\big\}.
\]

\subsection{Varchenko's conjecture for very affine varieties}

We extend the theorem of Orlik and Terao to smooth very affine varieties. Let $U$ be a smooth very affine variety of dimension $r$. Choose a closed embedding
\[
f: U \longrightarrow (\mathbb{C}^*)^n, \qquad f=(f_1,\ldots,f_n).
\]
The master function $\varphi_{\bf u}=\prod_{i=1}^n f_i^{u_i}$, where the exponents $u_i$ are integral parameters, is a holomorphic function on $U$. The \emph{maximum likelihood degree} of $U$ is defined to be the number of critical points of the master function with sufficiently general exponents $u_i$. 

\begin{Theorem}\label{main}
If the exponents $u_i$ are sufficiently general, then the following hold.
\begin{enumerate}
\item $\varphi_{\bf u}$ has only finitely many critical points in $U$.
\item All critical points of $\varphi_{\bf u}$ are nondegenerate.
\item The number of critical points is equal to the signed Euler characteristic $(-1)^r \chi(U)$.
\end{enumerate}
\end{Theorem}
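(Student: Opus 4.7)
\medskip

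\noindent\textbf{Proof proposal.} The plan is to identify critical points of $\varphi_{\bf u}$ with zeros of the logarithmic $1$-form $\omega_{\bf u} := \sum_i u_i\, d\log f_i \in H^0(U,\Omega^1_U)$, and to count those zeros via a top Chern class computation on a smooth logarithmic compactification. The key input is that $f:U\hookrightarrow(\mathbb{C}^*)^n$ is a closed embedding, so the forms $\alpha_i := d\log f_i$ are pullbacks of the translation-invariant generators of $\Omega^1_{(\mathbb{C}^*)^n}$ and, because $df$ is pointwise injective, they generate $\Omega^1_U$ as an $\mathcal{O}_U$-module. Consequently the evaluation map $\mathbb{C}^n\times U\to\Omega^1_U$, $({\bf u},x)\mapsto \omega_{\bf u}(x)$, is surjective on every fiber, and the incidence variety
\[
Z=\bigl\{({\bf u},x)\in\mathbb{C}^n\times U\ :\ \omega_{\bf u}(x)=0\bigr\}
\]
is smooth of dimension $n$. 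Generic smoothness of the projection $Z\to\mathbb{C}^n$ then gives (1) and (2): over a Zariski open $\mathcal{U}\subset\mathbb{C}^n$, the fiber $Z_{\bf u}$ is a finite reduced scheme whose points are exactly the nondegenerate critical points of $\varphi_{\bf u}$.

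For (3) I would invoke Hironaka to pick a smooth projective compactification $U\subset Y$ whose boundary $D=Y\setminus U$ is a simple normal crossings divisor. Each $f_i$ is an invertible regular function on $U$, so its divisor on $Y$ is supported on $D$, and $\alpha_i$ extends to a global section of the locally free sheaf $\Omega^1_Y(\log D)$ of logarithmic differentials. This yields a map of coherent sheaves
\[
\mathcal{O}_Y^{\,n}\longrightarrow \Omega^1_Y(\log D)
\]
that is surjective over $U$. In the favorable case in which this map is surjective on all of $Y$ (the \emph{genericity at infinity} condition foreshadowed in the introduction), the generic section $\omega_{\bf u}$ is transverse to the zero section of $\Omega^1_Y(\log D)$, its zero scheme lies entirely inside $U$, and its length equals the top Chern number $\int_Y c_r\bigl(\Omega^1_Y(\log D)\bigr)$. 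Deligne's logarithmic Gauss--Bonnet formula identifies this integral with $(-1)^r\chi(U)$, and (3) follows.

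For a general smooth very affine $U$ the above map can fail to be surjective along $D$, and the corresponding base locus may absorb part of the expected zero scheme. I would handle this by a sequence of blowups of $Y$ with smooth centers contained in $D$, producing a modification $\pi:\widetilde{Y}\to Y$ such that the pullbacks of the $\alpha_i$ globally generate $\Omega^1_{\widetilde{Y}}(\log\widetilde{D})$, where $\widetilde{D}=\pi^{-1}(D)$. Since the modification is an isomorphism over $U$, neither $\chi(U)$ nor the set of critical points of $\varphi_{\bf u}$ on $U$ is affected, while the Chern class computation and Deligne's formula now apply on $\widetilde{Y}$. The hardest step is precisely this one: showing that after the modification no zeros of $\omega_{\bf u}$ escape to $\widetilde{D}$ for generic ${\bf u}$, so that the top Chern number on $\widetilde{Y}$ equals the number of critical points on $U$. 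This asks for a controlled resolution of the base locus of $\langle\alpha_1,\ldots,\alpha_n\rangle$ along $D$, together with a residual-intersection style verification that generic choice of ${\bf u}$ avoids the exceptional contributions --- exactly the bookkeeping that the ``genericity at infinity'' hypothesis is designed to bypass.
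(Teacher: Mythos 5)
Your treatment of (1) and (2) --- the incidence variety over $U$ is a bundle of the expected rank because the evaluation map $({\bf u},x)\mapsto\omega_{\bf u}(x)$ is fiberwise surjective, and generic smoothness over the parameter space gives finite reduced fibers --- is essentially the paper's argument (which uses Jouanolou's Bertini theorem for the same purpose), and your identification of the count in (3) with $\int c_r\bigl(\Omega^1(\log D)\bigr)$ followed by the logarithmic Poincar\'e--Hopf theorem is also the right endgame. The genuine gap is the step you yourself flag as the hardest: showing that for generic $\mathbf{u}$ no zero of $\omega_{\mathbf u}$ lies on the boundary. You propose to achieve this by blowing up until $\Omega^1_{\widetilde Y}(\log\widetilde D)$ is globally generated by the $\alpha_i$ and then running a residual-intersection argument, and you suggest that without a genericity-at-infinity hypothesis this bookkeeping is out of reach. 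In fact neither the blowups nor the residual intersections are needed, and the theorem requires no hypothesis at infinity. The paper takes an SNC resolution $\pi\colon\widetilde V\to V$ of the closure $V$ of $U$ in $\mathbb{P}^n$; every boundary component $D_j$ then maps into the toric boundary $\bigcup_{i=0}^n H_i$, so some $f_i$ has a genuine zero or pole along $D_j$ (Lemma~\ref{A}). Hence for $\mathbf u$ outside finitely many corank-one subgroups of $\mathbb{Z}^n$ the integer $\mathrm{ord}_{D_j}(\varphi_{\mathbf u})=\sum_i u_i\,\mathrm{ord}_{D_j}(f_i)$ is nonzero for every $j$ (Lemma~\ref{middle}); and since locally $\mathrm{dlog}(\varphi_{\mathbf u})=\sum_j \mathrm{ord}_{D_j}(\varphi_{\mathbf u})\,\mathrm{dlog}(g_j)+\psi$ with $\psi$ regular and the $\mathrm{dlog}(g_j)$ part of a local frame of $\Omega^1_{\widetilde V}(\log D)$, the section cannot vanish at any point of $D$ (Lemma~\ref{nonvanishing}). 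This residue computation is the missing idea: it forces the entire zero scheme into $U$, where Bertini already makes it finite and reduced, and regularity of the section then identifies it with $c_r\bigl(\Omega^1_{\widetilde V}(\log D)\bigr)\cap[\widetilde V]$.

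Two smaller points. First, even in your ``favorable case,'' global generation of $\Omega^1_Y(\log D)$ does not by itself place the zero scheme of a generic section inside $U$ as you assert; you would still need the dimension count that the incidence variety over $D$ has dimension at most $n-2$, hence proper image in $\mathbb{P}^{n-1}$. Second, the choice of compactification matters: an arbitrary Hironaka compactification of the abstract variety $U$ need not have the property that every boundary divisor carries a zero or pole of some $f_i$. The order argument works because $\widetilde V$ is chosen to dominate the closure of $U$ in the toric compactification $\mathbb{P}^n$ of the ambient torus, so that the boundary of $\widetilde V$ lies over $\mathbb{P}^n\setminus(\mathbb{C}^*)^n$, which is exactly where the $f_i$ acquire their zeros and poles.
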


More precisely, there is a nonzero polynomial $F$ such that the assertions are valid for $u_1,\ldots,u_n$ with $F(u_1,\ldots,u_n) \neq 0$.

Theorem \ref{main} shows that, for instance, the conclusions of \cite[Theorem 20]{Catanese-Hosten-Khetan-Sturmfels} and \cite[Corollary 6]{Damon1} hold for smooth very affine varieties without further assumptions. This has a few immediate corollaries that might be of interest in algebraic geometry and algebraic statistics. First, the maximum likelihood degree does not depend on the embedding of $U$ into an algebraic torus. Second, the maximum likelihood degree satisfies the deletion-restriction formula as in the case of a linear model. Third, the sign of the Euler characteristic of a smooth very affine variety depends only on the parity of its dimension.

\subsection{A geometric formula for the CSM class}

The theorem of Orlik and Terao can be further generalized to very affine varieties which admit a good tropical compactification in the sense of Tevelev \cite{Tevelev}. See Definition \ref{schon} for \emph{sch\"on} very affine varieties.
For example, the complement of an essential hyperplane arrangement is sch\"on, 
and the hypersurface defined by a sufficiently general Laurent polynomial (with respect to its Newton polytope) is sch\"on.
The open subset of the Grassmannian $\text{Gr}(2,n)$ given by nonvanishing of all Pl\"ucker coordinates is another sch\"on very affine variety, which is of interest in algebraic statistics \cite{Speyer-Sturmfels}.

The generalization is formulated in terms of the variety of critical points of $U$, the totality of critical points of all possible (multivalued) master functions for $U$. More precisely, given a compactification $\overline{U}$ of $U$, the variety of critical points $\mathfrak{X}(U)$ is defined to be the closure
\[
\mathfrak{X}(U)=\overline{\mathfrak{X}^\circ(U)} \subseteq \overline{U} \times \mathbb{P}^{n-1} \quad\text{of}\ \ \mathfrak{X}^\circ(U)=\Bigg\{\sum_{i=1}^n u_i \cdot \text{dlog}(f_i)(x)=0 \Bigg\} \subseteq U \times \mathbb{P}^{n-1},
\]
where $\mathbb{P}^{n-1}$ is the projective space with the homogeneous coordinates $u_1,\ldots,u_n$. The variety of critical points has been studied previously in the context of hyperplane arrangements \cite{CDFV,Denham-Garrousian-Schulze}. See Section \ref{proof} for a detailed construction in the general setting.

We relate the variety of critical points to the Chern-Schwartz-MacPherson class of \cite{MacPherson}.
Let $\mathbb{T}_U$ be the intrinsic torus of $U$, an algebraic torus containing $U$ whose character lattice is the group of nonvanishing regular functions on $U$ modulo nonzero constants. We compactify the intrinsic torus by the projective space $\mathbb{P}^n$, where $n$ is the dimension of $\mathbb{T}_U$.

\begin{Theorem}\label{proCSM}
Suppose that $U$ is an $r$-dimensional very affine variety which is not isomorphic to a torus. If $U$ is sch\"on, then
\[
\big[\mathfrak{X}(U)\big] =
\sum_{i=0}^{r} v_i \big[ \mathbb{P}^{r-i} \times \mathbb{P}^{n-1-r+i}\big] \in A_{*}(\mathbb{P}^n \times \mathbb{P}^{n-1}),
\]
where
\[
c_{SM}(\mathbf{1}_U)=\sum_{i=0}^{r} (-1)^i v_i \hspace{0.7mm} [\mathbb{P}^{r-i}] \in A_*(\mathbb{P}^n).
\]
\end{Theorem}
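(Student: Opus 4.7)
The plan is to realize $\mathfrak{X}(U)$ as a projective subbundle of $\overline{U} \times \mathbb{P}^{n-1}$ for a suitable sch\"on compactification $\overline{U}$, express its class as a localized top Chern class, and then push forward to $\mathbb{P}^n \times \mathbb{P}^{n-1}$ and compare with the MacPherson formula for the Chern-Schwartz-MacPherson class of the simple-normal-crossings pair $(\overline{U}, D)$.

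To set up, choose a sch\"on compactification $\overline{U}$ of $U$ inside a smooth toric compactification of the intrinsic torus $\mathbb{T}_U$ that refines the fan of $\mathbb{P}^n$, so that $D := \overline{U} \setminus U$ is simple normal crossings and there is a canonical morphism $\bar\pi : \overline{U} \to \mathbb{P}^n$. Restricting the trivial logarithmic cotangent bundle from the ambient toric variety gives a morphism $\mathcal{O}_{\overline{U}}^{\,n} \to \Omega^1_{\overline{U}}(\log D)$ sending $e_i \mapsto \mathrm{dlog}(f_i)$; the sch\"on hypothesis is equivalent to the surjectivity of this map, so its kernel $K$ is a vector subbundle of rank $n-r$. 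Since $U$ is not a torus we have $r < n$ and $K \ne 0$, so the projective subbundle $\mathbb{P}(K) \subset \overline{U} \times \mathbb{P}^{n-1}$ is an irreducible closed subvariety of dimension $n-1$ containing $\mathfrak{X}^\circ(U)$ as an open dense subset; by dimension and irreducibility, $\mathfrak{X}(U) = \mathbb{P}(K)$.

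Next, $\mathbb{P}(K)$ is the zero locus of the regular section of the rank-$r$ bundle $\mathcal{O}(1) \otimes \Omega^1_{\overline{U}}(\log D)$ induced by the tautological composition $\mathcal{O}(-1) \hookrightarrow \mathcal{O}^n \twoheadrightarrow \Omega^1_{\overline{U}}(\log D)$. Expanding its top Chern class via $c_r(L \otimes E) = \sum_{i} c_1(L)^{r-i} c_i(E)$ and using the K\"unneth decomposition of $[\overline{U} \times \mathbb{P}^{n-1}]$ yields
\[
[\mathfrak{X}(U)] \;=\; \sum_{i=0}^{r} \bigl( c_i(\Omega^1_{\overline{U}}(\log D)) \cap [\overline{U}] \bigr) \times [\mathbb{P}^{n-1-r+i}] \qquad \text{in } A_*(\overline{U} \times \mathbb{P}^{n-1}).
\]
The pushforward along $\bar\pi \times \mathrm{id}$ sends each left-hand factor into $A_{r-i}(\mathbb{P}^n) = \mathbb{Z}\hspace{0.3mm}[\mathbb{P}^{r-i}]$. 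Combining the MacPherson formula $c_{SM}(\mathbf{1}_U) = \bar\pi_*\bigl(c(T_{\overline{U}}(-\log D)) \cap [\overline{U}]\bigr)$ for the SNC pair $(\overline{U}, D)$ with the identity $c_i(T_{\overline{U}}(-\log D)) = (-1)^i c_i(\Omega^1_{\overline{U}}(\log D))$, matching coefficients produces $\bar\pi_*\bigl(c_i(\Omega^1_{\overline{U}}(\log D)) \cap [\overline{U}]\bigr) = v_i\hspace{0.3mm}[\mathbb{P}^{r-i}]$, which gives the desired formula.

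The principal obstacle is the surjectivity of $\mathcal{O}_{\overline{U}}^{\,n} \to \Omega^1_{\overline{U}}(\log D)$ for sch\"on $\overline{U}$: this is exactly where the sch\"on hypothesis enters and allows $\mathfrak{X}(U)$ to be treated as a projective bundle over $\overline{U}$ (without it, the map could acquire cokernel on boundary strata and $\mathbb{P}(K)$ would fail to be of the expected dimension). Independence of the construction from the choice of sch\"on compactification can be verified by passing to a common smooth sch\"on refinement; the remaining Chern-class bookkeeping and pushforward are standard.
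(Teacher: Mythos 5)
Your proposal is correct and follows essentially the same route as the paper: use the sch\"on hypothesis to produce a smooth simple normal crossings tropical compactification on which the evaluation map $W_{\overline{U}} \to \Omega^1_{\overline{U}}(\log D)$ is surjective, identify $\mathfrak{X}(U)$ with the projectivization of the kernel subbundle, and convert to the CSM class via the identity $c_{SM}(\mathbf{1}_U)=c\big(\Omega^1_{\overline{U}}(\log D)^\vee\big)\cap[\overline{U}]$. The only cosmetic difference is the bookkeeping: the paper computes the pushforward of powers of $c_1(\mathcal{O}_{\mathbb{P}(W)}(1))$ as the Segre class of $N^\vee_{V/X}$ and applies the Whitney formula, whereas you expand the localized top Chern class of the twisted bundle $\mathcal{O}(1)\otimes\Omega^1_{\overline{U}}(\log D)$; these are equivalent manipulations.
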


Theorem \ref{main} is recovered by considering the number of points in a general fiber of the second projection from $\mathfrak{X}(U)$, which is the maximum likelihood degree
\[
v_r=(-1)^r\int c_{SM}(\mathbf{1}_U)= (-1)^r\chi(U).
\]

When $U$ is the complement of an essential hyperplane arrangement $\mathcal{A}$ and $\mathbb{P}^n$ is the usual compactification of $\mathbb{T}_U$ defined by the ratios of homogeneous coordinates $z_1/z_0,\ldots,z_n/z_0$, Theorem \ref{proCSM} specializes to the geometric formula for the characteristic polynomial of Denham et al. \cite[Theorem 1.1]{Denham-Garrousian-Schulze}:
\[
\chi_\mathcal{A}(q+1)=\sum_{i=0}^{r} (-1)^i v_i \hspace{0.7mm} q^{r-i}.
\]
The formula  is used in \cite{Huh2} to verify Dawson's conjecture on the logarithmic concavity of the $h$-vector of a matroid complex, for matroids representable over a field of characteristic zero \cite{Dawson}. Other implications of the geometric formula are collected in Remark \ref{logconcave}.

\subsection{A generalization of Kouchnirenko's theorem}

A sch\"on hypersurface in an algebraic torus is defined by a Laurent polynomial which is nondegenerate in the sense of Kouchnirenko \cite{Kouchnirenko}. We generalize Kouchnirenko's theorem equating the Euler characteristic with the signed volume of the Newton polytope, in the setting of Theorem \ref{proCSM}. We hope that the approach of the present paper clarifies an analogy noted in \cite[Remarks (e)]{Varchenko}, where Varchenko asks for a connection between Kouchnirenko's theorem and the conjecture stated in the introduction.

Let $g$ be a Laurent polynomial in $n$ variables with the Newton polytope $\Delta_g$, and denote the corresponding hypersurface by
\[
U=\{g=0\} \subseteq (\mathbb{C}^*)^n.
\]
Fix the open embedding $(\mathbb{C}^*)^n \subset \mathbb{P}^n$ defined by the ratios of homogeneous coordinates $z_1/z_0,\ldots,z_n/z_0$.

We follow the convention of \cite[Chapter 7]{Cox-Little-Oshea} and write $\text{MV}_n$ for the $n$-dimensional mixed volume. For example, the $n$-dimensional standard simplex $\Delta$ in $\mathbb{R}^n$ has the unit volume $1$.

\begin{Theorem}\label{Kush}
Let $g$ be a nonzero Laurent polynomial in $n=r+1$ variables with
\[
c_{SM}(\mathbf{1}_{U}) = \sum_{i=0}^{r} (-1)^{i} v_i \hspace{0.7mm} [\mathbb{P}^{r-i}] \in A_*(\mathbb{P}^n).
\]
If $g$ is nondegenerate, then
\[
v_i=\text{MV}_n(\underbrace{\Delta,\ldots,\Delta}_{r-i},\underbrace{\Delta_g,\ldots,\Delta_g}_{i+1}) \quad \text{for} \ i=0,\ldots,r.
\]
In particular, the maximum likelihood degree of $U$ is equal to the normalized volume
\[
v_r=(-1)^{r} \int c_{SM}(\mathbf{1}_{U})= \text{Volume}(\Delta_g).
\]
\end{Theorem}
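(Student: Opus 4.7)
The plan is to apply Theorem \ref{proCSM} to $U$, which is sch\"on by the nondegeneracy of $g$, and extract each coefficient $v_i$ as an intersection number computable by Bernstein's theorem on mixed volumes. Concretely, Theorem \ref{proCSM} gives
\[
[\mathfrak{X}(U)] = \sum_{i=0}^r v_i \hspace{0.7mm} [\mathbb{P}^{r-i} \times \mathbb{P}^{n-1-r+i}] \in A_*(\mathbb{P}^n \times \mathbb{P}^{n-1}),
\]
so $v_i$ is the degree of $[\mathfrak{X}(U)] \cdot h_1^{r-i} h_2^i$, where $h_1,h_2$ denote the hyperplane classes of $\mathbb{P}^n$ and $\mathbb{P}^{n-1}$. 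The first step is to identify $\mathfrak{X}^\circ(U)$ concretely. Writing $g=\sum_{\alpha \in S} c_\alpha z^\alpha$, the critical equation $\sum_i u_i \hspace{0.5mm}\text{dlog}(z_i)|_{T_xU}=0$ translates, via $T_xU = \ker(dg_x)$, into $u_i = \lambda\hspace{0.5mm} z_i\partial_i g(x)$ for some $\lambda \in \mathbb{C}^*$. Hence $\mathfrak{X}^\circ(U)$ is the graph of the logarithmic Gauss map $\Phi\colon x \mapsto [z_1\partial_1 g(x):\cdots:z_n\partial_n g(x)]$, and $\mathfrak{X}(U)$ is its closure in $\mathbb{P}^n\times \mathbb{P}^{n-1}$. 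Note $\Phi$ is a morphism on $U$ because nondegeneracy forces $U$ smooth, so the partials cannot vanish simultaneously on $U$.

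A general hyperplane in $\mathbb{P}^n$ pulls back to a generic Laurent polynomial $\ell$ with Newton polytope the standard simplex $\Delta$, and a general hyperplane $\{\sum_j a_j u_j=0\}$ in $\mathbb{P}^{n-1}$ pulls back under $\Phi$ to $\sum_j a_j\hspace{0.5mm} z_j\partial_j g = \sum_\alpha c_\alpha \langle a,\alpha\rangle z^\alpha$. Since $\dim(\mathfrak{X}(U)\setminus \mathfrak{X}^\circ(U)) \le r-1$, generic codimension-$r$ cycles meet $\mathfrak{X}(U)$ only inside $\mathfrak{X}^\circ(U) \subset (\mathbb{C}^*)^n \times \mathbb{P}^{n-1}$. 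Thus $v_i$ is the number of solutions in $(\mathbb{C}^*)^n$ of
\[
g(z)=0,\quad \ell_1(z)=\cdots=\ell_{r-i}(z)=0,\quad \sum_j a_{1j}z_j\partial_j g = \cdots = \sum_j a_{ij}z_j\partial_j g = 0
\]
for generic $\ell_k$ and $a_m$. A direct computation shows that the Newton polytope of a generic $\sum_j a_j z_j\partial_j g$ equals $\text{conv}(S\setminus\{0\})$. After multiplying $g$ by a monomial $z^{-\beta}$ with $\beta$ in the relative interior of $\Delta_g$, which leaves $U$ unchanged and only translates $\Delta_g$ (so all mixed volumes are preserved by translation invariance in each argument), the origin is no longer a vertex and this Newton polytope coincides with $\Delta_g$. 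Bernstein's theorem then yields
\[
v_i = \text{MV}_n\big(\Delta_g, \underbrace{\Delta,\ldots,\Delta}_{r-i}, \underbrace{\Delta_g,\ldots,\Delta_g}_{i}\big) = \text{MV}_n\big(\underbrace{\Delta,\ldots,\Delta}_{r-i}, \underbrace{\Delta_g,\ldots,\Delta_g}_{i+1}\big)
\]
by symmetry, which is the desired formula. The case $i=r$ combined with Theorem \ref{main} recovers the classical Kouchnirenko identity $v_r=\text{Volume}(\Delta_g)=(-1)^r\chi(U)$.

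The main technical obstacle is verifying that Bernstein's bound is actually attained rather than merely an upper bound for the specific system built from $g$ and its logarithmic partials, since these are not independent Laurent polynomials. Bernstein's genericity criterion requires that for every face $F$ of the Minkowski sum of the Newton polytopes, the corresponding face system has no torus roots. The face polynomial of $\sum_j a_j z_j\partial_j g$ along $F$ is expressible in terms of $g_F$ and generic scalar combinations of its logarithmic derivatives, so the required vanishing reduces to the nondegeneracy hypothesis on $g$. A secondary point is that Theorem \ref{proCSM} refers to the intrinsic torus of $U$, which must be checked to coincide with the ambient $(\mathbb{C}^*)^n$ (equivalently, the embedding is intrinsic) so that the compactification $\mathbb{P}^n$ in the theorem matches the one in the formula; this holds whenever $\Delta_g$ is full-dimensional, and the degenerate low-dimensional cases can be reduced to it by a toric quotient.
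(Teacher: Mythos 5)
Your route is genuinely different from the paper's. The paper does not pass through Theorem \ref{proCSM} or Bernstein's theorem at all: it computes $c_{SM}(\mathbf{1}_U)$ directly on a smooth toric resolution $X$ of $\mathbb{P}^n$ adapted to the support function of $\Delta_g$, using the residue exact sequence $0 \to \Omega^1_X(\log D) \to \Omega^1_X(\log D\cup V) \to \mathcal{O}_V \to 0$, the triviality of $\Omega^1_X(\log D)$, and the toric identification of the intersection numbers $(H^{n-i}\cdot V^i)$ with mixed volumes via $\text{Div}(g)$. Your identification of $\mathfrak{X}^\circ(U)$ with the graph of the logarithmic Gauss map and the reduction of $v_i$ to a root count in $(\mathbb{C}^*)^n$ are correct and make the link with Varchenko's question explicit, but as written the argument has two genuine gaps.

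First, Theorem \ref{proCSM} is a statement about the intrinsic torus $\mathbb{T}_U$ and its compactification, and your claimed sufficient condition for $\mathbb{T}_U=(\mathbb{C}^*)^n$ (full-dimensionality of $\Delta_g$) is false: $U$ may carry units that are not restrictions of monomials. For $g=z_2-(z_1-1)(z_1-2)$, which is nondegenerate with full-dimensional Newton polygon, $U\cong \mathbb{C}\setminus\{0,1,2\}$ and $M_U\cong\mathbb{Z}^3$, so Theorem \ref{proCSM} applied verbatim yields a class in $A_*(\mathbb{P}^3\times\mathbb{P}^2)$, not the statement you need over $\mathbb{P}^2$; there is no pushforward from one to the other. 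The repair is to invoke the identity established inside the proof of Theorem \ref{proCSM2},
\[
c_{SM}(\mathbf{1}_U)=\sum_{i=0}^r(-1)^i\,\text{pr}_{1*}\big[\text{pr}_2^{-1}(\mathbb{P}^{r-i})\cap\mathfrak{X}(U)\big],
\]
whose proof (conormal sequence plus Segre class) uses only a sch\"on closed embedding and so applies to $U\subset(\mathbb{C}^*)^n$ with $W=\mathbb{C}^n$; but this must be said explicitly, since the theorem as stated does not apply.

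Second, the verification of Bernstein's genericity criterion is exactly where the nondegeneracy hypothesis does its work, and you dispose of it in one sentence. For each nonzero weight $w$ one must show that the facial system $g_\sigma=0$, the $w$-initial forms of the generic $\ell_k$, and $\sum_j a_{mj}z_j\partial_j g_\sigma=0$ has no common torus root; this follows from nondegeneracy (the logarithmic partials of $g_\sigma$ do not all vanish on $\{g_\sigma=0\}\cap(\mathbb{C}^*)^n$) together with the quasi-invariance of $\{g_\sigma=0\}$ under the one-parameter subgroup of $w$ and the genericity of the $a_m$ and $\ell_k$, but the dimension count needs to be carried out, not asserted. Two smaller points: $\Delta_g$ need not contain a lattice point in its relative interior (take $\Delta_g=\Delta$), so to remove the exceptional vertex at the origin you should instead multiply $g$ by $z^{-\beta}$ for a lattice point $\beta$ outside the support of $g$ (this leaves the logarithmic Gauss map on $U$ unchanged since $z_j\partial_j(z^{-\beta}g)=z^{-\beta}z_j\partial_j g$ on $\{g=0\}$); and your argument presumes $U$ is irreducible and not a torus, whereas a nondegenerate $g$ can define a reducible or toric $U$, cases which the paper's direct computation handles uniformly.
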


Theorem \ref{Kush} has applications not covered by Kouchnirenko's theorem. In particular, we get an explicit formula for the degree of the gradient map of a homogeneous polynomial in terms of the Newton polytope; see Corollary \ref{ProjectiveDegree}. This shows that many delicate examples discovered in classical projective geometry have a rather simple combinatorial origin. 

As an example, we find irreducible homaloidal projective hypersurfaces of given degree $d \ge 3$ and ambient dimension $n \ge 3$, improving upon previous constructions in \cite{CRS,Fassarella-Medeiros}; see Example \ref{homaloidal}.

\subsection{Organization}

We provide a brief overview of the paper.

Section \ref{proof} is devoted to the proof of Theorem \ref{main}. Along the way we construct the variety of critical points and describe its basic properties. 

Section \ref{additivity} introduces the deletion-restriction for hyperplane arrangements, extending that to very affine varieties. A brief introduction to the Chern-Schwartz-MacPherson class is given, and Theorem \ref{proCSM} is proved. 

Section \ref{Kushnirenko} focuses on the maximum likelihood degree of nondegenerate hypersurfaces in algebraic tori. Applications of Theorem \ref{Kush} to the geometry of projective hypersurfaces are given.

\section{Proof of Theorem \ref{main}}\label{proof}

\subsection{The Gauss map of very affine varieties}\label{GaussMap}

An important role will be played by the Gauss map of a very affine variety in its intrinsic torus. Let $U$ be a smooth very affine variety of dimension $r$. Choose a closed embedding
\[
f: U \longrightarrow (\mathbb{C}^*)^n, \qquad f=(f_1,\ldots,f_n).
\]
By a theorem of Samuel (see \cite{Samuel}), the group of invertible regular functions $M_U:=\mathbb{C}[U]^*/\mathbb{C}^*$ is a finitely generated free abelian group. Therefore one may choose $f_i$ to form a basis of $M_U$. In this case, $f$ is a closed embedding of $U$ into the intrinsic torus $\mathbb{T}_U$ with the character lattice $M_U$
\[
f: U \longrightarrow \mathbb{T}_U.
\]
Any morphism from $U$ to an algebraic torus $\mathbb{T}$ is a composition of $f$ with a homomorphism $\mathbb{T}_U \to \mathbb{T}$. The Gauss map of $U$ is defined by the pushforward of $f$ followed by left-translation to the identity; that is,
\[
T_xU \longrightarrow T_{f(x)} \mathbb{T}_U \longrightarrow T_{{\bf 1}} \mathbb{T}_U \quad \text{for} \ x \in U.
\]
In coordinates, the first map is represented by the Jacobian matrix 
\[
\Bigg(\frac{\partial f_i}{\partial x_j}\Bigg), \quad 1 \le i \le n, \quad 1 \le j \le r,
\] 
and the second map is represented by the diagonal matrix with diagonal entries $1/f_i(x)$. The composition of the two is the logarithmic Jacobian matrix 
\[
\Bigg(\frac{\partial \log f_i}{\partial x_j}\Bigg), \quad 1 \le i \le n, \quad 1 \le j \le r.
\] 
This defines the Gauss map from $U$ to the Grassmannian of $T_\mathbf{1}\mathbb{T}_U$:
\[
U \longrightarrow \text{Gr}_r(T_\mathbf{1}\mathbb{T}_U), \qquad x \longmapsto T_xU \subseteq T_\mathbf{1}\mathbb{T}_U.
\]
Let $\Omega^1_U$ be the sheaf of differential one-forms on $U$. Consider the complex vector space 
\[
W:=M_U \otimes_\mathbb{Z} \mathbb{C}.
\]
The dependence of $W$ on $U$ will often be omitted from the notation. We write $W_U$ for (the sheaf sections of) the trivial vector bundle over $U$ with the fiber $W$. There is a vector bundle homomorphism $\Phi$, defined by the evaluation of the logarithmic differential forms as follows:
\[
\Phi: W_U \longrightarrow \Omega^1_U, \qquad \Big(\prod_{i=1}^n f_i^{u_i},x\Big) \longmapsto \sum_{i=1}^n u_i \cdot \text{dlog}(f_i)(x).
\]
At a point $x \in U$, the linear map $\Phi(x)$ between the fibers is dual to the injective linear map considered above,
\[
T_xU \longrightarrow T_{f(x)} \mathbb{T}_U \longrightarrow T_{{\bf 1}} \mathbb{T}_U.
\]
Therefore $\Phi$ is surjective and $\ker \Phi$ is a vector bundle over $U$.

\subsection{The variety of critical points}\label{VarietyofCriticalPoints} The inclusion of $\ker \Phi$ into $W_U$ defines a closed immersion between the projective bundles
\[
\mathfrak{X}^\circ(U):=\text{Proj}\big(\text{Sym}(\ker \Phi ^\vee)\big) \longrightarrow \text{Proj}\big(\text{Sym}(W_U^\vee)\big) \simeq U \times \mathbb{P}(W).
\]
Note that the following conditions are equivalent:

\begin{enumerate}[1.]
\item $\Phi$ is injective.
\item $U$ is isomorphic to a torus.
\item $\mathfrak{X}^\circ(U)$ is empty.
\end{enumerate}
If $\mathfrak{X}^\circ(U)$ is not empty, then $\mathfrak{X}^\circ(U)$ is a projective bundle over $U$ of dimension equal to that of $\mathbb{P}(W)$, defined by the equation
\[
\sum_{i=1}^n u_i \cdot \text{dlog}(f_i)(x)=0
\]
where $u_1,\ldots,u_n$ are the homogeneous coordinates of $\mathbb{P}(W)$. In short, $\mathfrak{X}^\circ(U)$ is the set of critical points of all possible (multivalued) master functions.

\begin{definition}\label{VC}
Given a compactification $V$ of $U$, the \emph{variety of critical points} of $U$ is defined to be the closure 
\[
\mathfrak{X}_V(U):=\overline{\mathfrak{X}^\circ(U)} \subseteq V \times \mathbb{P}(W).
\]
We denote the variety of critical points by $\mathfrak{X}(U)$ when there is no danger of confusion.
\end{definition}

The variety of critical points is irreducible by construction. When $U$ is the complement of an essential arrangement of hyperplanes, $\mathfrak{X}(U)$ is the variety of critical points previously considered in the context of hyperplane arrangements \cite{CDFV,Denham-Garrousian-Schulze}. This variety has its origin in \cite[Proposition 4.1]{Orlik-Terao}.

We record here the following basic compatibility: If $V_1 \to V_2$ is a morphism between two compactifications of $U$ which is the identity on $U$, then the class of $\mathfrak{X}_{V_1}(U)$ maps to the class of $\mathfrak{X}_{V_2}(U)$ under the induced map between the Chow groups
\[
A_*\big(V_1 \times \mathbb{P}(W)\big) \longrightarrow A_*\big(V_2 \times \mathbb{P}(W)\big), \qquad \big[\mathfrak{X}_{V_1}(U)\big] \longmapsto \big[\mathfrak{X}_{V_2}(U)\big].
\]

\begin{remark}
We point out a technical difference between Definition \ref{VC} and the variety of critical points in the cited literature. This remark is intended for readers familiar with \cite{CDFV,Denham-Garrousian-Schulze} and is independent of the rest of the paper.

Let $U$ be the complement of $\mathcal{A}$, an essential arrangement of $n$ hyperplanes in $\mathbb{C}^r$. The variety of critical points in \cite{Denham-Garrousian-Schulze} is defined when $\mathcal{A}$ is a central arrangement, so we suppose that this is the case.  In our notation, it is the quotient under the torus action
\[
\widetilde{\mathfrak{X}}(U):=\mathfrak{X}_V(U)/(\mathbb{C}^* \times \mathbf{1}) \subseteq (\mathbb{C}^{r} \times \mathbb{P}^{n-1})/(\mathbb{C}^* \times \mathbf{1}) = \mathbb{P}^{r-1} \times \mathbb{P}^{n-1},
\]
where $V$ is the partial compactification of $U$ by the affine space $\mathbb{C}^r$. Since $\mathcal{A}$ is central, by \cite[Proposition 3.9]{Orlik-Terao} we have
\[
\widetilde{\mathfrak{X}}(U)\subseteq \{u_1+\cdots+u_n=0\}\simeq \mathbb{P}^{r-1} \times \mathbb{P}^{n-2} \subset \mathbb{P}^{r-1} \times \mathbb{P}^{n-1}.
\]

The quotient variety $\widetilde{\mathfrak{X}}(U)$ is indeed the variety of critical points (of a closely related arrangement) in our sense. Consider a decone $\widetilde{\mathcal{A}}$ of $\mathcal{A}$, an affine arrangement obtained by declaring one of the hyperplanes in the projectivization of $\mathcal{A}$ to be the hyperplane at infinity. The number of hyperplanes and the rank of the decone are one less than the corresponding quantities for $\mathcal{A}$. More precisely, we have the following relation between the characteristic polynomials:
\[
\chi_{\widetilde{\mathcal{A}}}(q) = \chi_\mathcal{A}(q)/(q-1).
\]
Let $\widetilde{U}$ be the complement of the decone in $\mathbb{C}^{r-1}$, and take the obvious compactification $\mathbb{P}^{r-1}$ of $\widetilde{U}$. Then the variety of critical points of $\widetilde{U}$ is the subvariety considered above,
\[
\widetilde{\mathfrak{X}}(U) \subseteq \mathbb{P}^{r-1} \times \mathbb{P}^{n-2}.
\]
The reader is invited to compare the formula of Corollary \ref{DGS} with its cohomology version \cite[Theorem 1.1]{Denham-Garrousian-Schulze} for essential central arrangements.
\end{remark}

\subsection{Nonvanishing at the boundary}

Let $H_0,\ldots,H_n$ be the torus-invariant hyperplanes in $\mathbb{P}^n$ defined by the homogeneous coordinates $z_0,\ldots,z_n$. Fix the open embedding
\[
\iota : (\mathbb{C}^*)^n \longrightarrow \mathbb{P}^n,
\]
defined by the ratios $z_1/z_0,\ldots,z_n/z_0$. Let $V$ be the closure of $U$ in $\mathbb{P}^{n}$, and choose a simple normal crossing resolution of singularities
\[
\xymatrix{
\pi^{-1}(U) \ar[r] \ar[d]& \widetilde{V} \ar[d]^\pi &\\
U\ar[r]^\iota&V \ar[r]& \mathbb{P}^{n},
}
\]
where $\pi$ is an isomorphism over $U$, $\widetilde{V}$ is smooth, and $\widetilde{V} \setminus \pi^{-1}(U)$ is a simple normal crossing divisor with the irreducible components $D_1,\ldots,D_k$. Our goal is to show that a sufficiently general differential form on $\widetilde{V}$ with logarithmic poles along $D_j$ has a zero scheme which is a finite set of reduced points in $U$.

Each $f_i$ defines a rational function on $\widetilde{V}$ which is regular on $\pi^{-1}(U)$. We have that
\[
\text{ord}_{D_j} (f_i) \ \text{is}\ \begin{cases} 
\text{positive} & \text{if $\pi(D_j) \nsubseteq H_0 $ and $\pi(D_j) \subseteq H_i$,}\\ 
\text{negative}& \text{if $\pi(D_j) \subseteq H_0 $ and $\pi(D_j) \nsubseteq H_i$.}\\
\end{cases}
\]

\begin{lemma}\label{A}
For each $j$, there is an $i$ such that $\text{ord}_{D_j} (f_i)$ is nonzero.
\end{lemma}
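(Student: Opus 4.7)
The approach is a direct contradiction argument based on the geometry of how $H_0,\ldots,H_n$ cover $\mathbb{P}^n\setminus(\mathbb{C}^*)^n$ while having empty common intersection. I would first observe that since $\pi(D_j)$ is contained in the boundary $V\setminus\iota(U)=V\cap(H_0\cup\cdots\cup H_n)$, and $\pi(D_j)$ is irreducible, it must lie inside at least one of the coordinate hyperplanes $H_i$.

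The plan is then to split into two cases, matching the two clauses displayed just before the lemma. First, if $\pi(D_j)\not\subseteq H_0$, pick any $i\ge 1$ with $\pi(D_j)\subseteq H_i$; then $f_i=z_i/z_0$ is a regular function on a Zariski neighborhood of the generic point of $\pi(D_j)$ that vanishes there, so $\text{ord}_{D_j}(f_i)>0$. Second, if $\pi(D_j)\subseteq H_0$, I want to find some $i\ge 1$ with $\pi(D_j)\not\subseteq H_i$; then $f_i=z_i/z_0$ has a pole along $\pi(D_j)$, giving $\text{ord}_{D_j}(f_i)<0$.

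The only way both cases could fail simultaneously is $\pi(D_j)\subseteq H_0\cap H_1\cap\cdots\cap H_n$, and the main (mild) point is to observe that this intersection is empty in $\mathbb{P}^n$, since no point has all homogeneous coordinates equal to zero. This contradiction forces some $i$ with $\text{ord}_{D_j}(f_i)\neq 0$.

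The argument is essentially combinatorial and the only obstacle is keeping the two hyperplane conditions (containment in $H_0$ versus containment in some $H_i$, $i\ge1$) straight so that the sign of the order comes out correctly; the rest is the trivial geometric fact that the $n+1$ coordinate hyperplanes of $\mathbb{P}^n$ have empty common intersection.
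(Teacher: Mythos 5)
Your proposal is correct and follows essentially the same route as the paper: the irreducibility of $D_j$ forces $\pi(D_j)$ into a single $H_i$ because $\bigcup_{i=0}^n H_i$ covers $\mathbb{P}^n\setminus(\mathbb{C}^*)^n$, and the case $\pi(D_j)\subseteq H_0$ is handled by the emptiness of $\bigcap_{i=0}^n H_i$, exactly as in the paper's two-case set-theoretic argument. The sign analysis of $\text{ord}_{D_j}(f_i)$ you supply is the content of the display preceding the lemma, so nothing is missing.
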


\begin{proof} 
Since $D_j$ is irreducible, each $\pi(D_j)$ is contained in some $H_i$. The assertion follows from the following set-theoretic reasoning:

\begin{enumerate}[1.]
\item If $\pi(D_j) \nsubseteq H_0$, then $\pi(D_j) \subseteq H_i$ for some $i$ because $\bigcup_{i=0}^n H_i = \mathbb{P}^n \setminus (\mathbb{C}^*)^n$. 
\item If $\pi(D_j) \subseteq H_0$, then $\pi(D_j) \nsubseteq H_i$ for some $i$ because $\bigcap_{i=0}^n H_i = \emptyset$.  
\end{enumerate}
\end{proof}

An integral vector $\mathbf{u}=(u_1,\ldots,u_n) \in \mathbb{Z}^n$ defines a rational function on $\widetilde{V}$
\[
\varphi_{\bf u} = \prod_{i=1}^n f_i^{u_i}, 
\]
which is regular on $\pi^{-1}(U)$. Note that for each $j$,
\[
\text{ord}_{D_j}(\varphi_{\bf u})=\sum_{i=1}^n u_i \cdot \text{ord}_{D_j} (f_i).
\]
Combining this with Lemma \ref{A}, we have the following result.

\begin{lemma}\label{middle}
For a sufficiently general $\mathbf{u} \in \mathbb{Z}^n$, $\text{ord}_{D_j}(\varphi_{\bf u})$ is nonzero for all $j=1,\ldots,k$.
\end{lemma}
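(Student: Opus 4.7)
The plan is to combine the formula
\[
\text{ord}_{D_j}(\varphi_{\bf u})=\sum_{i=1}^n u_i \cdot \text{ord}_{D_j} (f_i)
\]
recorded just before the lemma with Lemma \ref{A} in a direct way. For each fixed $j \in \{1,\ldots,k\}$, the right-hand side is a $\mathbb{Z}$-linear form $\ell_j(\mathbf{u}) = \sum_{i=1}^n a_{ji} u_i$ in the variables $u_1,\ldots,u_n$, with integer coefficients $a_{ji}:=\text{ord}_{D_j}(f_i)$. Lemma \ref{A} guarantees that for each $j$ at least one of the $a_{ji}$ is nonzero, so $\ell_j$ is a nonzero linear form, and its vanishing locus $\{\ell_j=0\}$ is a proper hyperplane of $\mathbb{C}^n$.

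Since there are only finitely many boundary divisors $D_1,\ldots,D_k$, the union $\bigcup_{j=1}^k \{\ell_j=0\}$ is a proper Zariski closed subset of $\mathbb{C}^n$. Its complement is a nonempty Zariski open set, which necessarily contains integral points; equivalently, for the polynomial $F(\mathbf{u}) := \prod_{j=1}^{k} \ell_j(\mathbf{u})$, any $\mathbf{u} \in \mathbb{Z}^n$ with $F(\mathbf{u}) \neq 0$ satisfies $\text{ord}_{D_j}(\varphi_{\bf u}) \neq 0$ simultaneously for all $j=1,\ldots,k$. This furnishes the desired notion of ``sufficiently general'' and proves the lemma.

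There is no real obstacle here: the content of the lemma is purely linear-algebraic and is an immediate consequence of Lemma \ref{A} together with the $\mathbb{Z}$-linearity of $\text{ord}_{D_j}$ in the exponent vector. The only thing to record is that the exceptional locus is cut out by a single explicit polynomial $F$, which is compatible with the genericity statement made in the remark following Theorem \ref{main}.
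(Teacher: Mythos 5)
Your proof is correct and is essentially the paper's own argument: the paper likewise combines Lemma \ref{A} with the linearity of $\text{ord}_{D_j}$ in $\mathbf{u}$, phrasing the exceptional locus as a union of corank-$1$ subgroups $A_1,\ldots,A_k$ of $\mathbb{Z}^n$, which is the same as your union of hyperplanes $\{\ell_j=0\}$ cut out by the product polynomial $F$.
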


\noindent More precisely, there are corank-$1$ subgroups $A_1,\ldots,A_k$ of $\mathbb{Z}^n$ such that $\text{ord}_{D_j}(\varphi_{\bf u})$
is nonzero for $\mathbf{u} \in \mathbb{Z}^n \setminus \cup_{j=1}^k A_j$ and $j=1,\ldots,k$.

Consider the sheaf of logarithmic differential one-forms $\Omega_{\widetilde{V}}^1(\log D)$, where $D=D_1+\cdots+D_k$. For the definition and the needed properties of the sheaf of logarithmic differential one-forms, we refer to \cite{Deligne,Saito}. We note that $\Omega_{\widetilde{V}}^1(\log D)$ is a locally free sheaf of rank $r$, and the rational function $\varphi_{\bf u}$ defines a global section
\[
\text{dlog}(\varphi_{\bf u})=\sum_{i=1}^n u_i \cdot \text{dlog}(f_i) \in H^0\big(\widetilde{V}, \Omega_{\widetilde{V}}^1(\log D)\big).
\]

\begin{lemma}\label{nonvanishing}
For a sufficiently general $\mathbf{u} \in \mathbb{Z}^n$, $\text{dlog}(\varphi_{\bf u})$ does not vanish on $D$.
\end{lemma}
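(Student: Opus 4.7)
\medskip

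\noindent\textbf{Proof plan.} The plan is to check non-vanishing pointwise on $D$ using a local trivialization of $\Omega^1_{\widetilde V}(\log D)$, and then argue that the obstruction to non-vanishing at any point is exactly the collection of order-of-vanishing conditions already excluded in Lemma~\ref{middle}.

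First I would fix a point $p\in D$ and pass to a simple normal crossing local coordinate system $(z_1,\dots,z_r)$ on a neighborhood of $p$ in which the components of $D$ through $p$, say $D_{j_1},\dots,D_{j_s}$ with $s\ge 1$, are the coordinate hyperplanes $\{z_l=0\}$ for $l=1,\dots,s$. In this chart the sheaf $\Omega^1_{\widetilde V}(\log D)$ is freely generated by
\[
\frac{dz_1}{z_1},\ \dots,\ \frac{dz_s}{z_s},\ dz_{s+1},\ \dots,\ dz_r,
\]
and each $f_i$ can be written as $f_i = h_i\cdot z_1^{a_{i1}}\cdots z_s^{a_{is}}$ with $h_i$ a unit at $p$ and $a_{il}=\text{ord}_{D_{j_l}}(f_i)$. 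Hence
\[
\text{dlog}(f_i)=\sum_{l=1}^{s} a_{il}\,\frac{dz_l}{z_l} + \text{dlog}(h_i),
\]
where $\text{dlog}(h_i)$ is a regular differential near $p$.

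Next I would assemble the master form:
\[
\text{dlog}(\varphi_{\bf u}) \;=\; \sum_{l=1}^{s}\Bigg(\sum_{i=1}^{n} u_i\, a_{il}\Bigg)\frac{dz_l}{z_l} \;+\; \sum_{i=1}^{n} u_i\,\text{dlog}(h_i).
\]
The key observation is that the coefficient of $\frac{dz_l}{z_l}$ is precisely $\text{ord}_{D_{j_l}}(\varphi_{\bf u})$. Evaluating in the fiber at $p$, the second sum contributes only in the $dz_{s+1},\dots,dz_r$ directions and the $\frac{dz_l}{z_l}$ directions (because $\text{dlog}(h_i)$ is a regular $1$-form), but the coefficients of $\frac{dz_l}{z_l}$ in the fiber at $p$ are unaffected by those regular contributions. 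Thus $\text{dlog}(\varphi_{\bf u})$ vanishes at $p$ only if $\text{ord}_{D_{j_l}}(\varphi_{\bf u})=0$ for every $l=1,\dots,s$.

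Finally I would invoke Lemma~\ref{middle}: there are corank-$1$ subgroups $A_1,\ldots,A_k\subset \mathbb{Z}^n$ such that for $\mathbf{u}\in \mathbb{Z}^n\setminus\bigcup_{j=1}^k A_j$ the integer $\text{ord}_{D_j}(\varphi_{\bf u})$ is nonzero for every $j=1,\dots,k$. Since any $p\in D$ lies on at least one $D_j$, the coefficient of $\frac{dz_l}{z_l}$ for that $l$ is nonzero, so $\text{dlog}(\varphi_{\bf u})(p)\neq 0$. As $p\in D$ was arbitrary, this proves the lemma, and the polynomial obstruction can be taken to be the product of the linear forms defining $A_1,\dots,A_k$.

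The step I expect to be the only delicate point is verifying that no regular summand $\text{dlog}(h_i)$ can cancel the logarithmic poles in the fiber at $p$; this is really just the freeness of the generators $\frac{dz_l}{z_l}, dz_{s+1},\dots,dz_r$ of $\Omega^1_{\widetilde V}(\log D)$ at $p$ together with the fact that $\text{dlog}(h_i)$, being a regular one-form, expands purely in $dz_1,\dots,dz_r$ and therefore only contributes a regular (vanishing in the log basis coefficients) part to the $\frac{dz_l}{z_l}$-coordinates.
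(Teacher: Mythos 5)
Your proof is correct and follows essentially the same route as the paper: a local computation in a trivialization of $\Omega^1_{\widetilde V}(\log D)$ showing that the coefficient of each logarithmic generator $\frac{dz_l}{z_l}$ at a boundary point is exactly $\text{ord}_{D_{j_l}}(\varphi_{\bf u})$, unaffected by the regular part, followed by an appeal to Lemma~\ref{middle}. The only cosmetic difference is that you expand each $f_i$ separately before summing, whereas the paper factors $\varphi_{\bf u}$ directly as a unit times a monomial in the local defining equations.
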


\begin{proof} 
Given a point $x \in D$, let $D_1,\ldots,D_l$ be the irreducible components of $D$ containing $x$, and let $g_1,\ldots,g_l$ be local defining equations on a small neighborhood $G$ of $x$. Clearly, $l$ is at least $1$. By replacing $G$ with a smaller neighborhood if necessary, we may assume that $\Omega^1_{\widetilde{V}}(\log D)$ trivializes over $G$, and that
\[
\varphi_{\bf u}= g_1^{o_1} \cdots g_l^{o_l} h \quad \text{where} \ o_j=\text{ord}_{D_j}(\varphi_{\bf u})
\]
for some nonvanishing holomorphic function $h$ on $G$. Over the open set $G$, we have
\[
\text{dlog}(\varphi_{\bf u}) = \Bigg(\sum_{j=1}^l \text{ord}_{D_j}(\varphi_{\bf u}) \cdot \text{dlog}(g_j) \Bigg)+\psi,
\]
where $\psi$ is a regular differential one-form. Since the $\text{dlog}(g_j)$ form part of a free basis of a trivialization of $\Omega_{\widetilde{V}}^1(\log D)$ over $G$, it follows from Lemma \ref{middle} that $\text{dlog}(\varphi_{\bf u})$ does not vanish on $D$ for a sufficiently general $\mathbf{u} \in \mathbb{Z}^n$.
\end{proof}

\subsection{Proof of Theorem \ref{main}}\label{ProofTheoremMain} 

Let $\mathfrak{X}(U)=\mathfrak{X}_{\widetilde{V}}(U)$ be the variety of critical points of $U$ in $\widetilde{V} \times \mathbb{P}(W)$, where $W$ is the complex vector space $M_U \otimes_\mathbb{Z} \mathbb{C}$ as before.
Write $W_{\widetilde{V}}$ for (the sheaf sections of) the trivial vector bundle over $\widetilde{V}$ with the fiber $W$, and consider the homomorphism $\Psi$ defined by evaluation of the logarithmic differential forms
\[
\Psi: W_{\widetilde{V}} \longrightarrow \Omega^1_{\widetilde{V}}(\log D), \qquad (x,\mathbf{u}) \longmapsto \text{dlog}(\varphi_\mathbf{u})(x):=\sum_{i=1}^n u_i \cdot \text{dlog}(f_i)(x).
\]
We do not attempt to give an individual meaning to the multivalued master function $\varphi_\mathbf{u}$ when $\mathbf{u}=(u_1,\ldots,u_n)$ is not integral. Let $\text{pr}_1$ and $\text{pr}_2$ be the two projections from $\widetilde{V} \times \mathbb{P}(W)$, and define the incidence variety of the evaluation by
\[
\mathscr{I}(U) = \mathscr{I}_{\widetilde{V}}(U)=\big\{(x,\mathbf{u}) \in \widetilde{V} \times \mathbb{P}(W) \mid \text{dlog}(\varphi_\mathbf{u})(x)=0 \big\}.
\]
We drop the subscript from $\mathscr{I}_{\widetilde{V}}(U)$ when there is no danger of confusion.
By definition, $\mathfrak{X}(U)$ is the unique irreducible component of $\mathscr{I}(U)$ which dominates $\widetilde{V}$ under $\text{pr}_1$.  

Then, for a sufficiently general $\mathbf{u} \in \mathbb{Z}^n$,

\begin{enumerate}[1.]
\item $\text{pr}_2^{-1}(\mathbf{u}) \cap \mathscr{I}(U)$ is contained in $\mathfrak{X}^\circ(U)$, by Lemma \ref{nonvanishing}, and
\item $\text{pr}_2^{-1}(\mathbf{u}) \cap \mathfrak{X}^\circ(U)$ is a finite set of reduced points, by the Bertini theorem applied to $\text{pr}_2$ on $\mathfrak{X}(U)$ \cite[Theoreme 6.10]{Jouanolou}. 
\end{enumerate}

More precisely, there is a nonempty Zariski open subset $\mathcal{U} \subseteq W$ such that the two assertions are valid for any element in the infinite set $M_U \cap \mathcal{U}$.

It follows that the zero scheme of the section $\text{dlog}(\varphi_\mathbf{u})\in H^0\big(\widetilde{V}, \Omega_{\widetilde{V}}^1(\log D)\big)$,
\[
\text{pr}_1\big(\text{pr}_2^{-1}(\mathbf{u})\big)=\big\{x \in \widetilde{V} \mid \text{dlog}(\varphi_\mathbf{u})(x) =0 \big\},
\]
is a finite set of reduced points in $U$. The smoothness of $\widetilde{V}$ implies that the section $\text{dlog}(\varphi_{\bf u})$ is regular, and the above set represents the homology class $c_r\big(\Omega_{\widetilde{V}}^1(\log D)\big) \cap [\widetilde{V}]$ \cite[Example 3.2.16]{Fulton0}. Since all the $f_i$ are nonvanishing on $U$, we may identify the critical points of $\varphi_{\bf u}$ with the zero scheme of $\text{dlog}(\varphi_{\bf u})$.

Therefore all the critical points of $\varphi_{\bf u}$ are nondegenerate, and the number of critical points is equal to the degree of the top Chern class of $\Omega^1_{\widetilde{V}}(\log D)$.
Finally, from the logarithmic Poincar\'e-Hopf theorem \cite{Kawamata,Norimatsu,Silvotti}, we have
\[
\int_{\widetilde{V}} c_r\big(\Omega_{\widetilde{V}}^1(\log D)\big)=(-1)^r \int_{\widetilde{V}} c_r\big(\Omega_{\widetilde{V}}^1(\log D)^\vee\big) =(-1)^r\chi(U).
\]

\qed

\section{Deletion-restriction for very affine varieties}\label{additivity}

In this section we formulate the deletion-restriction for the characteristic polynomial of a hyperplane arrangement in the very affine setting. The role of the characteristic polynomial will be played by a characteristic class for very affine varieties. 

This point of view is particularly satisfactory for very affine varieties satisfying a genericity condition at infinity, and gives new insights on the positivity of the coefficients of the characteristic polynomial. 
For complements of hyperplane arrangements, we recover the geometric formula for the characteristic polynomial of Denham et al. \cite[Theorem 1.1]{Denham-Garrousian-Schulze}.

Let $U$ be a very affine variety, and let $U_0$ be the hypersurface of $U$ defined by the vanishing of a regular function. The complement $U_1=U\setminus U_0$ is a very affine variety, being a principal affine open subset of a very affine variety.

\begin{definition}
A \emph{triple} of very affine varieties is a collection of the form $(U,U_0,U_1)$.
\end{definition}

 In the language of hyperplane arrangements, $U$ corresponds to the arrangement obtained by deleting the distinguished hyperplane $U_0$ from the arrangement corresponding to $U_1$. For this reason, we call $U$ the deletion of $U_1$ and call $U_0$ the restriction of $U_1$. As in the case of hyperplane arrangements, we have
\[
\dim U_1 = \dim U \quad  \text{and} \quad \dim U_0= \dim U -1.
\]
By the set-theoretic additivity of the topological Euler characteristics of complex algebraic varieties \cite[Section 4.5]{Fulton}, we have
\[
\chi(U)=\chi(U_1)+\chi(U_0).
\]
Therefore, by Theorem \ref{main}, the maximum likelihood degrees of a triple of smooth very affine varieties satisfy an additive formula.
Write $\text{ML}(U)$ for the maximum likelihood degree of a very affine variety $U$, i.e. the number of critical points of a master function of $U$ with sufficiently general exponents.

\begin{corollary}\label{add}
If $(U,U_0,U_1)$ is a triple of smooth very affine varieties, then
\[
\text{ML}(U)=\text{ML}(U_1)-\text{ML}(U_0).
\]
\end{corollary}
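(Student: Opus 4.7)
The plan is to combine Theorem \ref{main} with the set-theoretic additivity of the Euler characteristic that is already recalled immediately before the statement. The key observation is the dimension shift: $\dim U_1 = \dim U = r$, while $\dim U_0 = r-1$, so the signs $(-1)^{\dim}$ attached to each Euler characteristic by Theorem \ref{main} differ by one between $U_0$ and the other two varieties, which is exactly what produces the minus sign in the claimed formula.

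Concretely, I would proceed as follows. First, invoke Theorem \ref{main} three times, once for each of the smooth very affine varieties $U$, $U_1$, and $U_0$, to rewrite
\[
\text{ML}(U)=(-1)^r\chi(U), \qquad \text{ML}(U_1)=(-1)^r\chi(U_1), \qquad \text{ML}(U_0)=(-1)^{r-1}\chi(U_0).
\]
Next, apply the additivity identity $\chi(U)=\chi(U_1)+\chi(U_0)$, which holds because $U = U_1 \sqcup U_0$ is a set-theoretic disjoint decomposition of complex algebraic varieties. Multiplying this relation by $(-1)^r$ and using $(-1)^r\chi(U_0) = -(-1)^{r-1}\chi(U_0)$ yields
\[
\text{ML}(U)=(-1)^r\chi(U_1)+(-1)^r\chi(U_0)=\text{ML}(U_1)-\text{ML}(U_0),
\]
which is the desired deletion-restriction formula.

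There is essentially no obstacle here: the nontrivial content is entirely packaged in Theorem \ref{main}, and in the additivity of the topological Euler characteristic, both of which are already available at this point in the paper. One should, however, note explicitly why each of the three varieties is in fact a smooth very affine variety so that Theorem \ref{main} is applicable to each: $U$ is smooth very affine by hypothesis, $U_1$ is smooth very affine as a principal affine open subset (already remarked in the paragraph preceding the definition of a triple), and $U_0$ is smooth very affine by the standing assumption that $(U,U_0,U_1)$ is a triple of smooth very affine varieties. No genericity choice is needed beyond the independent genericity assumptions implicit in the three maximum likelihood degrees.
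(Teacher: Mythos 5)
Your proposal is correct and follows exactly the paper's own argument: apply Theorem \ref{main} to each of $U$, $U_1$, $U_0$ and combine with the additivity $\chi(U)=\chi(U_1)+\chi(U_0)$, the sign flip coming from $\dim U_0 = \dim U - 1$. The explicit check that each member of the triple is smooth very affine is a reasonable addition but does not change the substance.
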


It would be interesting to obtain a direct justification of Corollary \ref{add}.

\begin{remark}
Let us call a very affine variety \emph{primitive} if it does not have any deletion. For example, the complement of an essential hyperplane arrangement is primitive if and only if it is the complement of a Boolean arrangement. Note that this is the case exactly when the complement is isomorphic to its intrinsic torus.
A distinguished feature of the deletion-restriction for very affine varieties, when compared to that for hyperplane arrangements, is that there are primitive very affine varieties which are not isomorphic to a torus. These very affine varieties are responsible for the noncombinatorial aspect of the extended theory.
\end{remark}

When $(U,U_0,U_1)$ is a triple of hyperplane arrangement complements, Corollary \ref{add} is the deletion-restriction formula for the M\"obius invariant $\chi_\mathcal{A}(1)$, where $\chi_\mathcal{A}(q)$ is the characteristic polynomial of an affine hyperplane arrangement $\mathcal{A}$. 
The full deletion-restriction formula between the characteristic polynomials can be formulated in terms of the Chern-Schwartz-MacPherson (CSM) class \cite{MacPherson}. Below we give a brief description of the CSM class; Aluffi provides a gentle introduction in \cite{Aluffi}.

Recall that the group of constructible functions $C(X)$ of an algebraic variety $X$ is generated by functions of the form $\mathbf{1}_Z$, where $Z$ is a subvariety of $X$.  If $f: X \to Y$ is a morphism between complex algebraic varieties, then the pushforward of constructible functions is defined by the homomorphism
\[
f_* : C(X) \longrightarrow C(Y), \qquad \mathbf{1}_Z \longmapsto \Big( p \longmapsto \chi\big(f^{-1}(p) \cap Z \big) \Big).
\]
If $X$ is a compact complex manifold, then the characteristic class of $X$ is the Chern class of the tangent bundle $c(TX) \cap [X] \in A_*(X)$, where $A_*(X)$ is the Chow homology group of $X$ (see \cite{Fulton0}). A generalization is provided by the Chern-Schwartz-MacPherson class, whose existence was once a conjecture of Deligne and Grothendieck. For a construction with emphasis on smooth and possibly noncompact varieties, see \cite{Aluffi2}.

Let $C$ be the functor of constructible functions from the category of complex algebraic varieties (with proper morphisms) to the category of abelian groups.

\begin{definition}
The \emph{CSM class} is the unique natural transformation
\[
c_{SM} : C \longrightarrow A_*
\]
such that $c_{SM}(\mathbf{1}_X) = c(TX) \cap [X]$ when $X$ is smooth and complete.
\end{definition}

The uniqueness follows from the naturality, the resolution of singularities, and the normalization for smooth and complete varieties. The CSM class satisfies the inclusion-exclusion relation
\[
c_{SM}(\mathbf{1}_{U \cup U'})=c_{SM}(\mathbf{1}_{U})+c_{SM}(\mathbf{1}_{U'})-c_{SM}(\mathbf{1}_{U \cap U'})
\]
and captures the Euler characteristic as its degree
\[
\chi(U) = \int c_{SM}(\mathbf{1}_U).
\]
When $U$ is the complement of an arrangement of hyperplanes $\mathcal{A}$ in $\mathbb{C}^r$, the CSM class of $U$ is the characteristic polynomial $\chi_\mathcal{A}(q)$. For the definition of the characteristic polynomial of an affine arrangement, see  \cite[Definition 2.52]{Orlik-Terao2}.

\begin{theorem}\label{characteristic}
Let $\mathbb{P}^r$ be the compactification of $U$ defined by the hyperplane at infinity $\mathbb{P}^r \setminus \mathbb{C}^r$. Then
\[
c_{SM}(\mathbf{1}_U)=\sum_{i=0}^{r} (-1)^i v_i \hspace{0.7mm} [\mathbb{P}^{r-i}]  \in A_*(\mathbb{P}^r),
\]
where
\[
\chi_\mathcal{A}(q+1)=\sum_{i=0}^{r} (-1)^i v_i \hspace{0.7mm} q^{r-i}.
\]
\end{theorem}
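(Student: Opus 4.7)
The plan is to combine Möbius inversion in the ring of constructible functions with the additivity and naturality of the CSM class. For an affine arrangement $\mathcal{A}$ in $\mathbb{C}^r$ with intersection poset $L(\mathcal{A})$, standard inclusion-exclusion on flats gives the identity
\[
\mathbf{1}_U = \sum_{X \in L(\mathcal{A})} \mu(X) \hspace{0.7mm} \mathbf{1}_X \in C(\mathbb{P}^r),
\]
where $X$ ranges over the flats of $\mathcal{A}$ (including $\mathbb{C}^r$ itself) and $\mu$ is the Möbius function of $L(\mathcal{A})$ normalized by $\mu(\mathbb{C}^r)=1$. Applying $c_{SM}$ and using its additivity then reduces the problem to computing $c_{SM}(\mathbf{1}_X) \in A_*(\mathbb{P}^r)$ for each affine flat $X$.

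Next I would compute $c_{SM}(\mathbf{1}_X)$ for an affine subspace $X \subset \mathbb{C}^r$ of dimension $d$. Its closure $\overline{X}$ in $\mathbb{P}^r$ is a linear $\mathbb{P}^d$, and the complement $\overline{X} \setminus X$ is the linear $\mathbb{P}^{d-1}$ at infinity. Inclusion-exclusion, together with the normalization $c_{SM}(\mathbf{1}_{\mathbb{P}^d}) = c(T\mathbb{P}^d) \cap [\mathbb{P}^d] = \sum_{k} \binom{d+1}{k}[\mathbb{P}^{d-k}]$ (pushed forward to $A_*(\mathbb{P}^r)$), yields after a short Pascal-triangle simplification
\[
c_{SM}(\mathbf{1}_X) = \sum_{j=0}^{d} \binom{d}{j} \hspace{0.7mm} [\mathbb{P}^j] \in A_*(\mathbb{P}^r).
\]

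Substituting and exchanging the order of summation,
\[
c_{SM}(\mathbf{1}_U) = \sum_{j=0}^{r} \Bigg( \sum_{X \in L(\mathcal{A})} \mu(X) \binom{\dim X}{j} \Bigg) [\mathbb{P}^j].
\]
On the other hand, the binomial expansion of
\[
\chi_\mathcal{A}(q+1) = \sum_{X \in L(\mathcal{A})} \mu(X) (q+1)^{\dim X}
\]
shows that the parenthesized inner sum is exactly the coefficient of $q^j$ in $\chi_\mathcal{A}(q+1)$, which by hypothesis equals $(-1)^{r-j} v_{r-j}$. Reindexing by $i = r-j$ gives the stated formula.

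There is no deep obstacle: the argument is a routine combination of a known inclusion-exclusion identity on the intersection lattice with the definitional properties of $c_{SM}$. The only place that requires care is the sign-and-index bookkeeping, in particular verifying the identity $c_{SM}(\mathbf{1}_X) = \sum_j \binom{\dim X}{j}[\mathbb{P}^j]$ for an affine flat, since this is precisely what converts the Möbius sum defining $\chi_\mathcal{A}(q)$ into its value at $q+1$.
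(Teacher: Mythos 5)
Your proof is correct, but it follows a different route from the paper. The paper argues by induction on the dimension and the number of hyperplanes: it observes that the additivity $c_{SM}(\mathbf{1}_{U_1})=c_{SM}(\mathbf{1}_{U})-c_{SM}(\mathbf{1}_{U_0})$ for a triple of arrangement complements matches the deletion--restriction recursion $\chi_{\mathcal{A}_1}(q+1)=\chi_{\mathcal{A}}(q+1)-\chi_{\mathcal{A}_0}(q+1)$, with base cases the empty arrangement (a direct computation of $c_{SM}(\mathbf{1}_{\mathbb{C}^r})$) and dimension one, deferring details to Aluffi and to Remark 26 of the author's earlier paper. You instead bypass the induction entirely: the M\"obius-inversion identity $\mathbf{1}_U=\sum_{X}\mu(X)\mathbf{1}_X$ on the intersection poset, the computation $c_{SM}(\mathbf{1}_X)=\sum_j\binom{\dim X}{j}[\mathbb{P}^j]$ for an affine flat (which I have checked, including the Pascal-rule cancellation and the degenerate case $\dim X=0$), and the binomial expansion of $(q+1)^{\dim X}$ give the result in closed form. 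Both arguments rest on the same two ingredients --- linearity of $c_{SM}$ on constructible functions and the CSM class of a linear subspace --- and your closed-form version is self-contained and arguably cleaner, and is in fact closer in spirit to the cited proof of Aluffi; what the paper's inductive formulation buys is the explicit parallel with deletion--restriction, which is the organizing theme of the section and of Corollary \ref{add}. One small point worth making explicit in your write-up: for an affine arrangement the intersection poset is only a meet-semilattice (some intersections may be empty), but the pointwise verification of $\sum_{X\ni p}\mu(X)=\mathbf{1}_U(p)$ goes through unchanged since the flats containing a given point form an interval.
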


This is because the recursive formula for a triple of arrangement complements
\[
c_{SM}(\mathbf{1}_{U_1})=c_{SM}(\mathbf{1}_{U}-\mathbf{1}_{U_0})=c_{SM}(\mathbf{1}_{U})-c_{SM}(\mathbf{1}_{U_0}),
\]
agrees with the usual deletion-restriction formula
\[
\chi_{\mathcal{A}_1}(q+1)=\chi_\mathcal{A}(q+1)-\chi_{\mathcal{A}_0}(q+1)
\]
(see \cite[Theorem 2.56]{Orlik-Terao2}).
The induction is on the dimension and on the number of hyperplanes. The case of no hyperplanes involves a direct computation of $c_{SM}(\mathbf{1}_{\mathbb{C}^r})$ by the inclusion-exclusion formula, and the case of dimension $1$ is a special case of the equality
\[
\chi(U) = \int c_{SM}(\mathbf{1}_U)=\chi_\mathcal{A}(1).
\]
See \cite[Theorem 1.2]{Aluffi3} and also \cite[Remark 26]{Huh1}.

Our goal is to relate the variety of critical points to the CSM class. If we restrict our attention to the degree of the CSM class, then the relation recovers the conclusion stated in Varchenko's conjecture for the Euler characteristic. We prove this for a class of very affine varieties satisfying a genericity condition at infinity. 

The genericity condition is commonly expressed using the language of tropical compactifications. If $U$ is a subvariety of an algebraic torus $\mathbb{T}$, then we consider the closures $\overline{U}$ of $U$ in various (not necessarily complete) normal toric varieties $X$ of $\mathbb{T}$. The closure $\overline{U}$ is complete if and only if the support of the fan of $X$ contains the tropicalization of $U$ \cite[Proposition 2.3]{Tevelev}. We say that $\overline{U}$ is a \emph{tropical compactification} of $U$ if it is complete and the multiplication map
\[
m: \mathbb{T} \times \overline{U} \longrightarrow X, \quad (t,x) \longmapsto tx
\]
is flat and surjective. Tropical compactifications exist, and they are obtained from toric varieties $X$ defined by sufficiently fine fan structures on the tropicalization of $U$ \cite[Section 2]{Tevelev}. 

\begin{definition}\label{schon}
We say that $U$ is \emph{sch\"on} if the multiplication is smooth for some tropical compactification of $U$.
\end{definition}

Equivalently, $U$ is sch\"on if the multiplication is smooth for every tropical compactification of $U$ \cite[Theorem 1.4]{Tevelev}.

\begin{remark}
There are two classes of sch\"on very affine varieties that are of particular interest. The first is the class of complements of essential hyperplane arrangements, and the second is the class of nondegenerate hypersurfaces \cite{Tevelev}. 
What we need from the sch\"on hypothesis is the existence of a simple normal crossings compactification which admits sufficiently many logarithmic differential one-forms. For arrangement complements, such a compactification is provided by the wonderful compactification of De Concini and Procesi \cite{DP}. For nondegenerate hypersurfaces, and more generally for nondegenerate complete intersections, the needed compactification has been constructed by Khovanskii \cite{Hovanskii}.
\end{remark}

Let $U \subseteq \mathbb{T}_U$ be a very affine variety of dimension $r$, where $\mathbb{T}_U$ is the intrinsic torus of Section \ref{GaussMap}. Let $V$ be the closure of $U$ in $\mathbb{P}^n$, where $\mathbb{P}^n$ is a fixed toric compactification of $\mathbb{T}_U$. We follow Section \ref{VarietyofCriticalPoints} and define the variety of critical points
\[
\mathfrak{X}(U) \subseteq V \times \mathbb{P}(W) \subseteq \mathbb{P}^n \times \mathbb{P}^{n-1} \quad \text{where} \ W=M_U \otimes_\mathbb{Z} \mathbb{C}.
\]

\begin{theorem}\label{proCSM2}
Suppose that $U$ is sch\"on and not isomorphic to a torus. Then
\[
\big[\mathfrak{X}(U)\big] =
\sum_{i=0}^{r} v_i \big[ \mathbb{P}^{r-i} \times \mathbb{P}^{n-1-r+i}\big] \in A_{*}(\mathbb{P}^n \times \mathbb{P}^{n-1}),
\]
where
\[
c_{SM}(\mathbf{1}_U)=\sum_{i=0}^{r} (-1)^i v_i \hspace{0.7mm} [\mathbb{P}^{r-i}] \in A_*(\mathbb{P}^n).
\]
\end{theorem}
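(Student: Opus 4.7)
The plan is to compute the class of $\mathfrak{X}(U)$ on a convenient smooth compactification and push it down, matching coefficients against the CSM class. By the sch\"on hypothesis, a sufficient refinement of the tropical fan of $U \subseteq \mathbb{T}_U$ produces a smooth complete toric variety $X_\Sigma$ containing $\mathbb{T}_U$ such that the closure $\widetilde{V} \subseteq X_\Sigma$ is a sch\"on tropical compactification with simple normal crossings boundary $D = \widetilde{V} \setminus U$. After further blowups one may assume there is a proper birational morphism $\pi: \widetilde{V} \to V \subseteq \mathbb{P}^n$ restricting to the identity on $U$, so by the compatibility noted after Definition \ref{VC} it suffices to compute $[\mathfrak{X}_{\widetilde{V}}(U)]$ in $A_*(\widetilde{V} \times \mathbb{P}^{n-1})$ and apply $(\pi \times \text{id})_*$.

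The key structural input is that the bundle homomorphism $\Phi$ of Section \ref{GaussMap} extends to a \emph{surjective} homomorphism
\[
\Psi: W_{\widetilde{V}} \longrightarrow \Omega^1_{\widetilde{V}}(\log D)
\]
on all of $\widetilde{V}$. Indeed, on the smooth toric ambient $X_\Sigma$ the sheaf $\Omega^1_{X_\Sigma}(\log \partial X_\Sigma)$ is trivial with fiber $W$, and Tevelev's smoothness criterion for sch\"on compactifications produces a surjection from its restriction to $\widetilde{V}$ onto $\Omega^1_{\widetilde{V}}(\log D)$, which one identifies with $\Psi$. Hence $\ker \Psi$ is a subbundle of rank $n-r$, and $\mathbb{P}(\ker \Psi) \subseteq \widetilde{V} \times \mathbb{P}(W)$ is irreducible, closed, of dimension $n-1$, and restricts to $\mathfrak{X}^\circ(U)$ over $U$. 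Therefore $\mathfrak{X}_{\widetilde{V}}(U) = \mathbb{P}(\ker \Psi)$, and the assumption that $U$ is not a torus (i.e.\ $r < n$) ensures nonemptiness.

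Now $\mathfrak{X}_{\widetilde{V}}(U)$ is the zero scheme of the regular section of $\text{pr}_1^* \Omega^1_{\widetilde{V}}(\log D) \otimes \text{pr}_2^* \mathcal{O}_{\mathbb{P}(W)}(1)$ obtained by composing the tautological inclusion $\mathcal{O}(-1) \hookrightarrow \text{pr}_1^* W_{\widetilde{V}}$ with $\Psi$. Using the identity $c_r(E \otimes L) = \sum_{i=0}^r c_{r-i}(E) \cdot c_1(L)^i$ for a rank-$r$ bundle $E$ and a line bundle $L$,
\[
[\mathfrak{X}_{\widetilde{V}}(U)] = \sum_{i=0}^r c_{r-i}\big(\Omega^1_{\widetilde{V}}(\log D)\big) \cdot h^i \cap [\widetilde{V} \times \mathbb{P}^{n-1}],
\]
where $h$ is the hyperplane class on $\mathbb{P}(W)$.

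The final step is to push forward along $\pi \times \text{id}$ and invoke the Aluffi-style identity
\[
c_{SM}(\mathbf{1}_U) = \pi_*\big(c(\Omega^1_{\widetilde{V}}(\log D)^\vee) \cap [\widetilde{V}]\big) \in A_*(\mathbb{P}^n),
\]
valid for any smooth SNC compactification of $U$. Matching degrees gives $\pi_*\big(c_j(\Omega^1_{\widetilde{V}}(\log D)) \cap [\widetilde{V}]\big) = v_j [\mathbb{P}^{r-j}]$, and substituting and re-indexing $j = r-i$ yields
\[
[\mathfrak{X}(U)] = \sum_{i=0}^r v_{r-i} [\mathbb{P}^i \times \mathbb{P}^{n-1-i}] = \sum_{j=0}^r v_j [\mathbb{P}^{r-j} \times \mathbb{P}^{n-1-r+j}],
\]
as asserted. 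The main obstacle is the global surjectivity of $\Psi$: this depends essentially on the sch\"on hypothesis through Tevelev's criterion and is what makes $\mathfrak{X}_{\widetilde{V}}(U)$ a projective subbundle with a clean Chern-class description. Everything afterward is routine Chern-class manipulation combined with the standard expression of the CSM class through logarithmic differentials.
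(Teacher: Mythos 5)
Your proposal is correct and follows essentially the same route as the paper: the sch\"on hypothesis yields a smooth tropical compactification with simple normal crossings boundary inside a smooth toric variety, the restriction of the trivial bundle of toric logarithmic forms gives the surjection $\Psi$ with locally free kernel (the paper phrases this as the conormal exact sequence $0 \to N_{V/X}^\vee \to \Omega^1_X(\log X\setminus \mathbb{T}_U)|_V \to \Omega^1_V(\log V\setminus U)\to 0$), $\mathfrak{X}(U)$ is identified with $\mathbb{P}(\ker\Psi)$, and the CSM class enters via $c_{SM}(\mathbf{1}_U)=c\big(\Omega^1_V(\log V\setminus U)^\vee\big)\cap[V]$. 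The only difference is cosmetic: you expand $[\mathfrak{X}(U)]$ as the top Chern class of $\mathrm{pr}_1^*\Omega^1(\log D)\otimes \mathrm{pr}_2^*\mathcal{O}(1)$, whereas the paper pushes forward powers of $c_1(\mathcal{O}_{\mathbb{P}(W)}(1))$ to obtain the Segre class $s(N_{V/X}^\vee)$ and applies the Whitney formula --- an equivalent computation.
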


\begin{proof}

We prove a slightly more general statement. Let $V$ be a compactification of $U$ obtained by taking the closure in a toric variety $X$ of $\mathbb{T}_U$. We will prove the equality
\[
c_{SM}({\mathbf 1}_U) = \sum_{i=0}^r (-1)^i \text{pr}_{1*} \big[ \text{pr}_2^{-1}(\mathbb{P}^{r-i}) \cap \mathfrak{X}(U) \big] \in A_*(V),
\]
where $\text{pr}_1$ and $\text{pr}_2$ are the two projections from $V \times \mathbb{P}(W)$ and $\mathbb{P}^{r-i}$ is a sufficiently general linear subspace of $\mathbb{P}(W)$ of the indicated dimension. The projection formula shows that this implies the stated version when $X=\mathbb{P}^n$.

If $U$ is a sch\"on very affine variety, then there is a tropical compactification of $U$ which has a simple normal crossings boundary divisor. More precisely, there is a smooth toric variety $\widetilde{X}$ of $\mathbb{T}_U$, obtained by taking a sufficiently fine fan structure on the tropicalization of $U$, such that the closure of $U$ in $\widetilde{X}$ is a smooth and complete variety $\widetilde{V}$ with the simple normal crossings divisor $\widetilde{V} \setminus U$ \cite[Proof of Theorem 2.5]{Hacking}.

By taking a further subdivision of the fan of $\widetilde{X}$ if necessary, we may assume that there is a toric morphism $\widetilde{X} \to X$ preserving $\mathbb{T}_U$. By the functoriality of the CSM class, we have
\[
A_*(\widetilde{V}) \longrightarrow A_*(V), \qquad c_{SM}(\mathbf{1}_U) \longmapsto c_{SM}(\mathbf{1}_U).
\]
Note also that
\[
A_*\big(\widetilde{V} \times \mathbb{P}(W)\big) \longrightarrow  A_*\big(V \times \mathbb{P}(W)\big), \qquad \big[\mathfrak{X}_{\widetilde{V}}(U)\big] \longmapsto \big[\mathfrak{X}_{V}(U)\big].
\]
By the projection formula, the problem is reduced to the case where $\widetilde{X}=X$. 

In this case, we have the following exact sequence induced by the restriction $\mathcal{O}_X|_V \to \mathcal{O}_V$:
\[
\xymatrix{
0 \ar[r] & N_{V/X}^\vee \ar[r] & \Omega^1_X(\log X \setminus \mathbb{T}_U)|_V \ar[r] & \Omega^1_{V}(\log V \setminus U) \ar[r] & 0.
}
\]
Note that the middle term is isomorphic to the trivial vector bundle $W_V$ over $V$ with the fiber $W$ \cite[Section 4.3]{Fulton}. Under this identification, the restriction of the differential one-forms is the evaluation map of Section \ref{ProofTheoremMain},
\[
\xymatrix{
0 \ar[r] &\ker \Psi \ar[r] & W_{V}  \ar[r]^{\Psi \hspace{9mm}}& \Omega^1_{V}(\log V \setminus U).
}
\]
It follows that the latter sequence is exact, and the projectivization of the kernel 
\[
\mathscr{I}(U) = \big\{(x,\mathbf{u}) \in V \times \mathbb{P}(W) \mid \text{dlog}(\varphi_\mathbf{u})(x)=0 \big\}
\]
coincides with the variety of critical points $\mathfrak{X}(U)$. Since the pullback of $\mathcal{O}_{\mathbb{P}(W)}(1)$ to $\mathfrak{X}(U)$ is the canonical line bundle $\mathcal{O}_{\mathbb{P}\big(N_{V/X}^\vee \big)}(1)$, we have
\begin{eqnarray*}
\sum_{i=0}^r \text{pr}_{1*} \big[ \text{pr}_2^{-1}(\mathbb{P}^{r-i}) \cap \mathfrak{X}(U) \big] &=& \text{pr}_{1*} \Big( \sum_{i=0}^r c_1\big(\text{pr}_2^*\ \mathcal{O}_{\mathbb{P}(W)}(1)\big)^{n-1-r+i} \cap \big[\mathfrak{X}(U) \big]\Big) \\
&=& s\big(N_{V/X}^\vee\big) \cap \big[V\big]  
\ = \ c\big(\Omega^1_{V}(\log V \setminus U)\big) \cap \big[V\big].
\end{eqnarray*}
Here $n$ is the dimension of $W$, and the last equality is the Whitney sum formula. Now the assertion follows from the fact that the CSM class of a smooth variety is the Chern class of the logarithmic tangent bundle; that is,
\[
c_{SM}(\mathbf{1}_U)= c\big(\Omega^1_{V}(\log V \setminus U)^\vee\big) \cap \big[V\big].
\]
This follows from a construction of the CSM class which is most natural from the point of view of this paper \cite[Section 4]{Aluffi2}. For precursors, see \cite[Theorem 1]{Aluffi0} and also \cite[Proposition 15.3]{Goresky-Pardon}.
\end{proof}

\begin{remark}
In a refined form, the above proof shows that the equality
\[
c_{SM}({\mathbf 1}_U) = \sum_{i=0}^r (-1)^i \text{pr}_{1*} \big[ \text{pr}_2^{-1}(\mathbb{P}^{r-i}) \cap \mathfrak{X}(U) \big]
\]
holds in the $\mathbb{T}$-equivariant proChow group $\widehat{A}^{\hspace{0.5mm} \mathbb{T}}_*(\mathbb{T}_U)$ of \cite{Aluffi2.5,Aluffi2}. This removes the dependence on the compactification from Theorem \ref{proCSM}. It should not be expected that the equality holds in the ordinary proChow group of $\mathbb{T}_U$.
\end{remark}

\begin{remark}
Theorem \ref{proCSM2} may fail to hold for a smooth very affine variety. As an example, consider a smooth hypersurface $U$ in $(\mathbb{C}^*)^3$ whose closure in $\mathbb{P}^3$ has a node at a torus orbit of codimension $1$. A direct computation on a simple normal crossings compactification of $U$ shows that the incidence variety $\mathscr{I}(U)$  has a two-dimensional component other than $\mathfrak{X}(U)$, and hence the classes of $\mathscr{I}(U)$ and $\mathfrak{X}(U)$ are different in $A_2(\mathbb{P}^3 \times \mathbb{P}^2)$.
\end{remark}

For complements of hyperplane arrangements, Theorem \ref{proCSM2} gives a geometric formula for the characteristic polynomial \cite[Theorem 1.1]{Denham-Garrousian-Schulze}. Let $U$ be the complement of an arrangement of $n$ distinct hyperplanes 
\[
\mathcal{A} = \{f_1=0\} \cup \cdots \cup \{f_n=0\} \subset \mathbb{C}^r.
\]
Then $U$ is a very affine variety if and only if $\mathcal{A}$ is an essential arrangement, meaning that the lowest-dimensional intersections of the hyperplanes of $\mathcal{A}$ are isolated points. Indeed, taking one of the isolated points as the origin of $\mathbb{C}^r$ and choosing $r$ linearly independent hyperplanes intersecting at that point reveals $U$ to be a principal affine open subset of $(\mathbb{C}^*)^r$. For the converse, write $U$ as a product $U' \times \mathbb{C}^k$, where $U'$ is the complement of an essential arrangement, and note that the affine line $\mathbb{C}$ does not admit a closed embedding into an algebraic torus.

Suppose from now on that $\mathcal{A}$ is an essential arrangement. Then $\mathcal{A}$ is a Boolean arrangement if and only if $U$ is isomorphic to an algebraic torus. The equations of the hyperplanes define a closed embedding 
\[
f: U \longrightarrow (\mathbb{C}^*)^n \simeq \mathbb{T}_U, \qquad f=(f_1,\ldots,f_n).
\]
The indicated isomorphism follows from the linear independence of the $f_i$ in $M_U$. We fix the open embedding $(\mathbb{C}^*)^n \subset \mathbb{P}^n$ defined by the ratios of homogeneous coordinates $z_1/z_0,\ldots,z_n/z_0$. Then the closure of $U$ in $\mathbb{P}^n$ is a linear subspace $\mathbb{P}^r \subset \mathbb{P}^n$. In this setting, combining Theorems \ref{characteristic} and \ref{proCSM2} gives the following statement.

\begin{corollary}\label{DGS}
Suppose that $\mathcal{A}$ is not a Boolean arrangement. Then
\[
\big[\mathfrak{X}(U)\big] =
\sum_{i=0}^{r} v_i \big[ \mathbb{P}^{r-i} \times \mathbb{P}^{n-1-r+i}\big] \in A_{*}(\mathbb{P}^r \times \mathbb{P}^{n-1}),
\]
where
\[
\chi_\mathcal{A}(q+1)=\sum_{i=0}^{r} (-1)^i v_i \hspace{0.7mm} q^{r-i}.
\]
\end{corollary}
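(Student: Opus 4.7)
The plan is to specialize Theorem \ref{proCSM2} to the arrangement complement $U$ and then substitute the CSM class computation of Theorem \ref{characteristic} to identify the coefficients. The first step is to verify the hypotheses of Theorem \ref{proCSM2}: the complement of an essential hyperplane arrangement is sch\"on, since the wonderful compactification of De Concini and Procesi supplies a smooth simple normal crossings compactification with enough logarithmic differential forms, as recalled in the remark following Definition \ref{schon}. Moreover, $U$ fails to be isomorphic to its intrinsic torus precisely because $\mathcal{A}$ is not Boolean, so the exclusion required by Theorem \ref{proCSM2} is exactly the hypothesis of the corollary.

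Next, I would identify the compactification implicit in the statement of the corollary with the one used in Theorem \ref{proCSM2}. Since $\mathcal{A}$ is essential, the defining linear functions $f_1,\ldots,f_n$ form a basis of $M_U$, so the closed embedding $f=(f_1,\ldots,f_n)$ realizes $U$ inside its intrinsic torus $\mathbb{T}_U \simeq (\mathbb{C}^*)^n$, and the open embedding $(\mathbb{C}^*)^n \subset \mathbb{P}^n$ defined by the ratios $z_1/z_0,\ldots,z_n/z_0$ is precisely the toric compactification appearing in Theorem \ref{proCSM2}. A short projective-closure computation with the affine-linear image of $U \subseteq \mathbb{C}^r$ inside $\mathbb{C}^n$ shows that the closure $V$ of $U$ in $\mathbb{P}^n$ is a linear subspace $\mathbb{P}^r$. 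The compatibility property of $\mathfrak{X}_V(U)$ under morphisms of compactifications recorded in Section \ref{VarietyofCriticalPoints} then identifies $[\mathfrak{X}(U)] \in A_*(\mathbb{P}^r \times \mathbb{P}^{n-1})$ with the class whose pushforward under the closed immersion $\mathbb{P}^r \times \mathbb{P}^{n-1} \hookrightarrow \mathbb{P}^n \times \mathbb{P}^{n-1}$ is computed by Theorem \ref{proCSM2}.

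With these identifications, Theorem \ref{proCSM2} yields the expansion
\[
[\mathfrak{X}(U)] = \sum_{i=0}^r v_i \big[\mathbb{P}^{r-i} \times \mathbb{P}^{n-1-r+i}\big] \in A_*(\mathbb{P}^r \times \mathbb{P}^{n-1}),
\]
where the integers $v_i$ are determined by $c_{SM}(\mathbf{1}_U) = \sum_{i=0}^r (-1)^i v_i [\mathbb{P}^{r-i}] \in A_*(\mathbb{P}^r)$. Finally, Theorem \ref{characteristic} identifies the same $v_i$ as the coefficients of the shifted characteristic polynomial $\chi_\mathcal{A}(q+1) = \sum_{i=0}^r (-1)^i v_i q^{r-i}$, completing the proof. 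The only delicate point in the argument is the bookkeeping around the ambient space: one must confirm that the closure of $U$ in the fixed toric compactification of $\mathbb{T}_U$ is a linear $\mathbb{P}^r$ and that the basis classes $[\mathbb{P}^{r-i} \times \mathbb{P}^{n-1-r+i}]$ of $A_{n-1}(\mathbb{P}^r \times \mathbb{P}^{n-1})$ map injectively to the corresponding basis classes in $A_{n-1}(\mathbb{P}^n \times \mathbb{P}^{n-1})$, so that the coefficients extracted from Theorem \ref{proCSM2} transfer unchanged to the finer ambient space of the corollary.
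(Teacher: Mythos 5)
Your proposal is correct and follows the paper's own route: the paper likewise derives the corollary by checking that an essential, non-Boolean arrangement complement is sch\"on and not a torus, that the $f_i$ give a basis of $M_U$ so $(\mathbb{C}^*)^n$ is the intrinsic torus with closure $\mathbb{P}^r\subset\mathbb{P}^n$, and then combining Theorems \ref{characteristic} and \ref{proCSM2}. You merely spell out the bookkeeping (injectivity of the pushforward $A_*(\mathbb{P}^r\times\mathbb{P}^{n-1})\to A_*(\mathbb{P}^n\times\mathbb{P}^{n-1})$ on the basis classes) that the paper leaves implicit.
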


\begin{remark}\label{logconcave}
A sequence $e_0,\ldots,e_r$ of integers is said to be \emph{log-concave} if $e_{i-1}e_{i+1}\le e_i^2$ for all $i$,
and it is said to have \emph{no internal zeros} if the indices of the nonzero elements are consecutive integers. Write a homology class $\xi \in A_k(\mathbb{P}^n \times \mathbb{P}^m)$ as the linear combination
\[
\xi = \sum_i e_i \big[\mathbb{P}^{k-i} \times \mathbb{P}^i \big].
\]
It can be shown that some multiple of $\xi$ is the fundamental class of an irreducible subvariety if and only if the $e_i$ form a log-concave sequence of nonnegative integers with no internal zeros \cite[Theorem 21]{Huh1}.

Therefore, by Theorem \ref{proCSM2}, the $v_i$ of a sch\"on very affine variety form a log-concave sequence of nonnegative integers with no internal zeros. In particular, the coefficients of $\chi_\mathcal{A}(q+1)$ form a sequence with the three properties. This strengthens the previous result that the coefficients of $\chi_\mathcal{A}(q)$ form a log-concave sequence \cite[Theorem 3]{Huh1}, and answers several questions on sequences associated to a matroid, for matroids representable over a field of characteristic zero:

\begin{enumerate}
\item Read's conjecture predicts that the coefficients of the chromatic polynomial of a graph form a unimodal sequence \cite{Read}. This follows from the log-concavity of $\chi_\mathcal{A}(q)$ when $\mathcal{A}$ is the graphic arrangement of a given graph \cite{Huh1}.
\item Hoggar's conjecture predicts that the coefficients of the chromatic polynomial of a graph form a strictly log-concave sequence  \cite{Hoggar}.  This follows from the log-concavity of $\chi_\mathcal{A}(q+1)$ when $\mathcal{A}$ is the graphic arrangement of a given graph \cite{Huh2}.
\item Welsh's conjecture predicts that the $f$-vector of a matroid complex forms a unimodal sequence \cite{Welsh}. This follows from the log-concavity of $\chi_\mathcal{A}(q)$ when $\mathcal{A}$ is an arrangement corresponding to the cofree extension of a given matroid \cite{Lenz}.
\item Dawson's conjecture predicts that the $h$-vector of a matroid complex forms a log-concave sequence \cite{Dawson}. This follows from the log-concavity of $\chi_\mathcal{A}(q+1)$ when $\mathcal{A}$ is an arrangement corresponding to the cofree extension of a given matroid  \cite{Huh2}.
\end{enumerate}
For details on the derivation of the above variations, see \cite{Huh2}.
\end{remark}

\begin{remark}
The characteristic class approach to Varchenko's conjecture and the generalized deletion-restriction have been pioneered by Damon for nonlinear arrangements on smooth complete intersections \cite{Damon1,Damon2}. In fact, it can be shown that Damon's higher multiplicities are the degrees of the CSM class of the arrangement complement. See \cite{Huh1} for the connection between the two, for nonlinear arrangements on a projective space.

The CSM point of view successfully deals with several problems considered by Damon in \cite{Damon1,Damon2}. In particular, an affirmative answer to the conjecture in \cite[Remark 2.6]{Damon2} follows from the product formula
\[
c_{SM}(\mathbf{1}_{U_1} \otimes \mathbf{1}_{U_2})=c_{SM}(\mathbf{1}_{U_1}) \otimes c_{SM}(\mathbf{1}_{U_2}) \in A_*(\overline{U_1} \times \overline{U_2}). 
\]
The above is a refinement of the product formula of Kwieci\'nski in \cite{Kwiecinski}, and can be viewed as a generalization of the product formula for the Euler characteristic,
\[
\chi(U_1 \times U_2) = \chi(U_1) \chi(U_2);
\]
see \cite[Th\'eor\`eme 4.1]{Aluffi2.5}.
\end{remark}

\section{Nondegenerate hypersurfaces}\label{Kushnirenko}

A nondegenerate Laurent polynomial defines a hypersurface in an algebraic torus which admits a tropical compactification with a simple normal crossings boundary divisor \cite[Section 2]{Hovanskii}. 
A sufficiently general Laurent polynomial with the given Newton polytope is nondegenerate, and the corresponding hypersurface is sch\"on.

We show that the variety of critical points of a hypersurface defined by a nondegenerate Laurent polynomial is controlled by the Newton polytope. This gives a formula for the CSM class in terms of the Newton polytope, which specializes to Kouchnirenko's theorem equating the Euler characteristic with the signed volume of the Newton polytope \cite{Kouchnirenko}. 

Let $g$ be a nonzero Laurent polynomial in $n$ variables
\[
g=\sum_\mathbf{u} c_\mathbf{u} \mathbf{x}^\mathbf{u} \in \mathbb{C}[x_1^{\pm 1},\ldots,x_n^{\pm 1}].
\]
We are interested in the CSM class of the very affine variety
\[
U=\{g=0\} \subseteq (\mathbb{C}^*)^n.
\]
The \emph{Newton polytope} of $g$, denoted by $\Delta_g$, is  the convex hull of exponents $\mathbf{u} \in \mathbb{Z}^n$ with nonzero coefficient $c_\mathbf{u}$.
Write $g_\sigma$ for the Laurent polynomial made up of those terms of $g$ which lie in a face $\sigma$ of the Newton polytope. We say that $g$ is \emph{nondegenerate} if $dg_\sigma$ is nonvanishing on $\{g_\sigma=0\}$ for every face of its Newton polytope.

We follow the convention of \cite[Chapter 7]{Cox-Little-Oshea} and write $\text{MV}_n$ for the $n$-dimensional mixed volume. For example, the $n$-dimensional standard simplex $\Delta$ in $\mathbb{R}^n$ has normalized volume $1$.

In view of later applications to projective hypersurfaces, we state our result for a fixed compactification $(\mathbb{C}^*)^n \subset \mathbb{P}^n$, where the open embedding is defined by the ratios of homogeneous coordinates $z_1/z_0,\ldots,z_n/z_0$. The formulation for other toric compactifications and the extension to complete intersections are left to the interested reader.  

\begin{theorem}\label{Kush2}
Let $g$ be a nonzero Laurent polynomial in $n=r+1$ variables with
\[
c_{SM}(\mathbf{1}_{U}) = \sum_{i=0}^{r} (-1)^{i} v_i \hspace{0.7mm} [\mathbb{P}^{r-i}] \in A_*(\mathbb{P}^n).
\]
If $g$ is nondegenerate, then
\[
v_i=\text{MV}_n(\underbrace{\Delta,\ldots,\Delta}_{r-i},\underbrace{\Delta_g,\ldots,\Delta_g}_{i+1}) \quad \text{for} \ i=0,\ldots,r.
\]
In particular, the maximum likelihood degree of $U$ is equal to the normalized volume
\[
v_r=(-1)^{r} \int c_{SM}(\mathbf{1}_{U})= \text{Volume}(\Delta_g).
\]
\end{theorem}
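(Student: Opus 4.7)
The plan is to use Theorem \ref{proCSM2} to convert each coefficient $v_i$ into a geometric intersection number on the variety of critical points $\mathfrak{X}(U) \subseteq \mathbb{P}^n \times \mathbb{P}^{n-1}$, to rewrite that count via Lagrange multipliers as the number of torus solutions of an explicit system of Laurent polynomial equations, and then to evaluate the count as a mixed volume using the Bernstein-Kouchnirenko theorem.

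First, since $g$ is nondegenerate the hypersurface $U$ is sch\"on and not a torus, so Theorem \ref{proCSM2} applies. Reading off the bidegree coefficient, $v_i$ equals the number of points in the transverse intersection $\mathfrak{X}(U) \cap (L \times L')$ for a sufficiently general linear subspace $L \subset \mathbb{P}^n$ of dimension $i+1$ and $L' \subset \mathbb{P}^{n-1}$ of dimension $r-i$. A Bertini-type argument (applied to the two projections from $\mathfrak{X}(U)$) lets me assume each contributing point $(x,\mathbf{u})$ lies in the open locus $\mathfrak{X}^{\circ}(U) \subseteq U \times \mathbb{P}^{n-1}$, so $x \in U \cap L$ and $\mathbf{u} \in L'$ with $\text{dlog}(\varphi_{\mathbf{u}})(x) = 0$ on $U$.

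Next, I rewrite the critical-point equations on the chart $(\mathbb{C}^*)^n$. By Lagrange multipliers applied to $U = \{g=0\}$, the condition $\text{dlog}(\varphi_{\mathbf{u}})|_U = 0$ is equivalent to the existence of $\lambda \in \mathbb{C}$ with
\[
u_j = \lambda\, x_j\, \frac{\partial g}{\partial x_j}(x) \qquad (j = 1,\ldots,n).
\]
Nondegeneracy forces $dg$ to be nonvanishing on $U$, so $(x_j \partial g/\partial x_j)_j$ is nonzero on $U$ and $\mathbf{u}$ is determined up to scalar by $x$. Substituting this expression, the $i$ linear conditions defining $L' \subset \mathbb{P}^{n-1}$ become $i$ Laurent equations of the form $\sum_j a_j^{(k)} x_j \partial g/\partial x_j = 0$, each with Newton polytope contained in $\Delta_g$; the $r-i$ linear conditions defining $L \subset \mathbb{P}^n$ become, in the affine chart, $r-i$ affine-linear equations $\alpha_0^{(\ell)} + \sum_j \alpha_j^{(\ell)} x_j = 0$ with Newton polytope $\Delta$. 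Together with $g=0$ itself, this is a system of $n = r+1$ Laurent polynomial equations in $n$ variables on $(\mathbb{C}^*)^n$, whose generic solution count is the Bernstein bound
\[
\text{MV}_n\bigl(\underbrace{\Delta_g,\ldots,\Delta_g}_{i+1},\underbrace{\Delta,\ldots,\Delta}_{r-i}\bigr).
\]

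Finally, I have to verify that Bernstein's upper bound is actually attained, which is the main obstacle. The sufficient criterion is that for every face $\sigma$ of the Minkowski sum of the Newton polytopes, the corresponding face-initial system has no solutions in $(\mathbb{C}^*)^n$. The face-initial form of $g$ is $g_\sigma$, and because the Euler-type operators $x_j \partial_{x_j}$ preserve each supporting halfspace, the face-initial forms of the $\sum_j a_j^{(k)} x_j \partial g/\partial x_j$ are $\sum_j a_j^{(k)} x_j \partial g_\sigma/\partial x_j$. The hypothesis that $dg_\sigma$ is nonvanishing on $\{g_\sigma = 0\} \cap (\mathbb{C}^*)^n$ for each face $\sigma$, combined with genericity of the coefficients $a_j^{(k)}$ and $\alpha_j^{(\ell)}$ coming from the Bertini step, rules out common torus solutions face-by-face; a short case analysis (distinguishing faces that involve only $\Delta_g$-type equations from mixed faces involving the linear $\Delta$-type equations) completes the verification. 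Once this face check is in hand, every torus solution is simple, each contributes a unique point of $\mathfrak{X}^{\circ}(U) \cap (L \times L')$, and one reads off $v_i$ as the claimed mixed volume; the special case $i = r$ recovers $v_r = \text{MV}_n(\Delta_g,\ldots,\Delta_g) = \text{Volume}(\Delta_g)$.
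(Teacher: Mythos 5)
Your proposal takes a genuinely different route from the paper. The paper's proof of Theorem \ref{Kush2} never mentions the variety of critical points or Bernstein's theorem: it passes to a smooth toric modification $X \to \mathbb{P}^n$ on which the support function of $\Delta_g$ is piecewise linear (so that the closure $V$ of $U$ is a simple normal crossings compactification, by Khovanskii), writes $c_{SM}(\mathbf{1}_{(\mathbb{C}^*)^n}) - c_{SM}(\mathbf{1}_U) = c\big(\Omega^1_X(\log D \cup V)^\vee\big)\cap[X]$, and then uses the residue exact sequence $0 \to \Omega^1_X(\log D) \to \Omega^1_X(\log D \cup V) \to \mathcal{O}_V \to 0$ together with the triviality of $\Omega^1_X(\log D)$ to reduce everything to the intersection numbers $(H^{n-i}\cdot V^i)$, which are mixed volumes by standard toric intersection theory. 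The advantage of that route is that no genericity of auxiliary data ever has to be verified; nondegeneracy enters only through the existence of the Khovanskii compactification. Your route --- reading $v_i$ off $[\mathfrak{X}(U)]$ via Theorem \ref{proCSM2}, converting the intersection with a general $L\times L'$ into a square Laurent system by Lagrange multipliers, and invoking Bernstein--Kouchnirenko --- is conceptually attractive because it exhibits each $v_i$ as an honest count of critical points, but it shifts all the work onto a genericity verification that the paper's argument avoids.

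That verification is where your proof has a real gap. The system you produce is \emph{not} a system with generic coefficients relative to its Newton polytopes: the $i$ equations $\sum_j a_j^{(k)} x_j \partial g/\partial x_j = 0$ have the highly constrained coefficients $c_{\mathbf{u}}\langle a^{(k)},\mathbf{u}\rangle$, so Bernstein's inequality only gives $v_i \le \text{MV}_n(\Delta^{r-i},\Delta_g^{i+1})$ until the face condition is actually checked, and the face condition is precisely where the full strength of nondegeneracy must be used. Your sketch correctly identifies the two key facts (the Euler operators commute with taking initial forms, and $dg_\sigma \neq 0$ on $\{g_\sigma=0\}$ forces $v_\sigma(x)=(x_j\partial_j g_\sigma(x))_j$ to be nonzero), but the ``short case analysis'' is not short: for a face $\sigma$ of small dimension the vector $v_\sigma(x)$ is confined to a linear subspace of dimension roughly $\dim\sigma+1$ (every linear functional constant on $\mathrm{supp}(g_\sigma)$ kills $v_\sigma(x)$ on $\{g_\sigma=0\}$), so one must balance the number $i$ of conditions $\langle a^{(k)}, v_\sigma(x)\rangle=0$ against $\dim\sigma$ and against the $r-i$ initial forms of the affine-linear equations, whose shape depends on which face of $\Delta$ the weight $w$ selects. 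This dimension bookkeeping is the technical heart of the argument and cannot be omitted. Two smaller points also need attention: (i) Theorem \ref{proCSM2} is stated for the intrinsic torus and assumes $U$ is not a torus, so you must either invoke the more general form implicit in its proof (closure in any toric compactification of any ambient torus containing $U$ as a closed subvariety) or reduce to it, and treat separately the degenerate cases where $U$ is empty or isomorphic to a torus; (ii) the Newton polytope of $\sum_j a_j x_j\partial_j g$ equals $\Delta_g$ only after normalizing $g$ by a monomial factor so that $0$ is not in its support, since the Euler operators annihilate the constant term.
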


Since the degree of $c_{SM}(\mathbf{1}_U)$ is the Euler characteristic $\chi(U)$, this recovers Kouchnirenko's theorem \cite[Th\'eor\`eme IV]{Kouchnirenko}.

\begin{proof}
In any case, $c_{SM}(\mathbf{1}_{(\mathbb{C}^*)^n})$ is the fundamental class of the ambient smooth and complete toric variety. Therefore we may assume that $U$ is nonempty and compute the CSM class of $(\mathbb{C}^*)^n\setminus U$ instead. 
 
Fix a sufficiently fine subdivison of the fan of $\mathbb{P}^n$ on which the support function of $\Delta_g$ is piecewise linear. We may assume that the corresponding toric variety $X$ is smooth and the closure $V$ of $U$ in $X$ has simple normal crossings with $D:=X \setminus (\mathbb{C}^*)^n$ \cite[Section 2]{Hovanskii}. Note in this case that
\[
c_{SM}(\mathbf{1}_{(\mathbb{C}^*)^n }) -c_{SM}(\mathbf{1}_U)
= c\big(\Omega^1_{X}(\log D \cup V)^\vee\big) \cap [X] \in A_*(X).
\]
In order to compute the right-hand side, we use the Poincar\'e-Leray residue map
\[
\text{r\'es}: \Omega^1_{X}(\log D\cup V) \longrightarrow \mathcal{O}_V, \qquad \eta \cdot \text{dlog}(z)+\psi \longmapsto \eta |_V,
\]
where $z$ is a local defining equation for $V$ and $\psi$ is a rational differential one-form which does not have poles along $V$. 
The restriction of $\eta$ on $V$ is uniquely and globally determined, and in particular it does not depend on the choice of $z$.
Note that the residue map fits into the exact sequence
\[
\xymatrix{0 \ar[r] &\Omega^1_{X}(\log D) \ar[r] &\Omega^1_{X}(\log D\cup V)\ar[r]^{\qquad \text{r\'es}} & \mathcal{O}_{V} \ar[r] & 0.
}
\]
Since $\Omega^1_{X}(\log D)$ is a trivial vector bundle, the Whitney sum formula shows that
\[
c\big(\Omega^1_{X}(\log D \cup V)^\vee\big) \cap [X]
=\sum_{i=0}^n (-1)^i c_1\big(\mathcal{O}_X(V)\big)^i \cap [X].
\]
Therefore, by the projection formula applied to the birational map $X \to \mathbb{P}^n$, we have
\[
c_{SM}(\mathbf{1}_{(\mathbb{C}^*)^n})-c_{SM}(\mathbf{1}_{U}) = \sum_{i=0}^n (-1)^i  (H^{n-i} \cdot V^i) [\mathbb{P}^{n-i}] \in A_*(\mathbb{P}^n),
\]
where $H$ is the pullback of a hyperplane in $\mathbb{P}^n$. 

It remains to show that the intersection product $(H^{n-i} \cdot V^i) $ is equal to the mixed volume $\text{MV}_n(\underbrace{\Delta,\ldots,\Delta}_{n-i},\underbrace{\Delta_g,\ldots,\Delta_g}_i)$.  For this one computes the divisor of the rational function $g$ on $X$,
\[
\text{Div}(g)=V+\sum_\rho \text{ord}_{D_\rho}(g) D_\rho=V+\sum_\rho \psi(u_\rho) D_\rho.
\]
Here $u_\rho$ is the primitive ray generator of a ray $\rho$, $D_\rho$ is the torus-invariant prime divisor of $X$ corresponding to $\rho$, and $\psi$ is the support function of $\Delta_g$. It follows that $V$ is linearly equivalent to the torus-invariant divisor
\[
V_\infty:=-\sum_\rho \psi(u_\rho) D_\rho.
\]
Therefore, by \cite[Section 5.4]{Fulton}, we have
\[
 (H^{n-i} \cdot V^i)= (H^{n-i} \cdot V_\infty^i)=\text{MV}_n(\underbrace{\Delta,\ldots,\Delta}_{n-i},\underbrace{\Delta_g,\ldots,\Delta_g}_i).
\]
\end{proof}

\begin{remark}
In this case, log-concavity of the $v_i$ discussed in Remark \ref{logconcave} is the Alexandrov-Fenchel inequality on the mixed volume of convex bodies \cite[Section 6.3]{Schneider}.
\end{remark}

\begin{remark}
Let $\mathscr{V}$ be a subvariety of the projective space with the homogeneous coordinates $p_0,p_1,\ldots,p_n$. 
In the statistical setting of \cite{Hosten-Khetan-Sturmfels}, one computes the maximum likelihood degree of the very affine variety
\[
\mathscr{U}:=\big\{x \in \mathscr{V} \mid p_0 p_1\cdots p_n (p_0+p_1+\cdots+p_n) \neq 0\big\} \subset \mathbb{P}^{n}.
\]
Suppose that $\mathscr{V}$ is the closure of the hypersurface in $(\mathbb{C}^*)^n$ defined by a sufficiently general Laurent polynomial with the given Newton polytope.
In the notation used in the proof of Theorem \ref{Kush2}, we have the residue exact sequence
\[
\xymatrix{0 \ar[r] &\Omega^1_{X}(\log D\cup V) \ar[r] &\Omega^1_{X}(\log D\cup V \cup H)\ar[r]^{\qquad \quad \text{r\'es}} & \mathcal{O}_{H} \ar[r] & 0,
}
\]
where $H$ is the pullback of the hypersurface $\{p_0+p_1+\cdots+p_n=0\}$ in $\mathbb{P}^n$. Therefore, by the Whitney sum formula,
\[
\text{ML}(\mathscr{U})=v_1+\cdots+v_{n}.
\]
When the Newton polytope is the $d$-th multiple of the standard simplex $\Delta$, we have
\[
\text{ML}(\mathscr{U})=d+\cdots+d^n=d\cdot \frac{d^n-1}{d-1}.
\]
This is the formula of \cite[Theorem 6]{Hosten-Khetan-Sturmfels}.
\end{remark}

The fact that the whole CSM class of a nondegenerate hypersurface in $(\mathbb{C}^*)^n$ is determined by the Newton polytope (and not just the topological Euler characteristic) has applications not covered by Kouchnirenko's theorem. In the remainder of this section we show how Theorem \ref{Kush2} can be applied to obtain results on the geometry of projective hypersurfaces. In particular, we give an explicit formula for the degree of the gradient map of a nondegenerate homogeneous polynomial in terms of its Newton polytope.

Let $h$ be a nonconstant homogeneous polynomial in $n+1$ variables
\[
h=\sum_\mathbf{u} c_\mathbf{u} \mathbf{z}^\mathbf{u} \in \mathbb{C}[z_0,\ldots,z_n].
\]
The gradient map of $h$ is the rational map
\[
\text{grad}(h): \mathbb{P}^n \dashrightarrow \mathbb{P}^n, \qquad \text{grad}(h)=\Bigg(\frac{\partial h}{\partial z_0}: \cdots : \frac{\partial h}{\partial z_n}\Bigg).
\]
The study of the gradient map of a homogeneous polynomial is one of the central topics in classical projective geometry. We refer to \cite{CRS} and \cite{DolgachevBook} for a historical introduction.

\begin{definition}
Let $\Gamma_h \subset \mathbb{P}^n \times \mathbb{P}^n$ be the closure of the graph of the gradient map of $h$. We define the integers $\mu^0(h),\ldots,\mu^n(h)$ by the formula
\[
[\Gamma_h] = \sum_{i=0}^n \mu^i (h)[\mathbb{P}^{n-i} \times \mathbb{P}^i ] \in A_n(\mathbb{P}^n \times \mathbb{P}^n).
\]
\end{definition}

For any nonconstant $h$:
\begin{itemize}
\item $\mu^0(h)$ is $1$;
\item $\mu^1(h)$ is one less than the reduced degree of $h$;
\item $\mu^n(h)$ is the degree of the gradient map of $h$; and 
\item in general, $\mu^i(h)$ is the number of $i$-dimensional cells in a CW-model of $D(h)$ (see \cite[Theorem 9]{Huh1}).
\end{itemize}

The following theorem of Aluffi relates the CSM class to the gradient map of a homogeneous polynomial \cite[Theorem 2.1]{Aluffi1}.

\begin{theorem}
Let $D(h)$ be the smooth affine variety
\[
D(h)=\{h \neq 0\} \subset \mathbb{P}^n.
\]
The CSM class of $D(h)$ is given by the formula
\[
c_{SM}(\mathbf{1}_{D(h)}) = \sum_{i=0}^n (-1)^i \mu^i(h) H^{i} (1+H)^{n-i} \in A_*(\mathbb{P}^n), 
\]
where $H$ is the class of a hyperplane in $\mathbb{P}^n$.
\end{theorem}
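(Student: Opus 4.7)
The strategy is to compute $c_{SM}(\mathbf{1}_{D(h)})$ via a log resolution that factors through the graph $\Gamma_h$, and to extract the bidegrees $\mu^i(h)$ by the projection formula. Set $Z=\{h=0\}$ and let $J\subset\mathbb{P}^n$ be the Jacobian subscheme cut out by the partials $\partial h/\partial z_j$. The first projection $\pi_1:\mathbb{P}^n\times\mathbb{P}^n\to\mathbb{P}^n$ identifies $\Gamma_h$ with the blowup $\mathrm{Bl}_J\mathbb{P}^n$, so $\Gamma_h$ already resolves the rational map $\mathrm{grad}(h)$. Blow up $\Gamma_h$ further if necessary to obtain a smooth variety $\pi:\widetilde{Y}\to\mathbb{P}^n$ factoring through $\Gamma_h$ on which $\pi^{-1}(Z)_{\mathrm{red}}=E=\sum_j E_j$ is a simple normal crossings divisor.

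\textbf{Main computation.} By the characterization of the CSM class of a smooth open variety recalled in the proof of Theorem~\ref{proCSM2},
\[
c_{SM}(\mathbf{1}_{D(h)})=\pi_*\bigl(c(\Omega^1_{\widetilde{Y}}(\log E)^\vee)\cap[\widetilde{Y}]\bigr).
\]
The residue sequence together with the Whitney formula rewrites the integrand as $c(T\widetilde{Y})\prod_j(1+E_j)^{-1}\cap[\widetilde{Y}]$. Pulling back the Euler sequence of $\mathbb{P}^n$ and accounting for the exceptional divisors of $\widetilde{Y}\to\mathbb{P}^n$ expresses $c(T\widetilde{Y})$ in terms of $\pi^*(1+H)^{n+1}$ up to contributions supported on $E$, which is where the factor $(1+H)^{n-i}$ in the target formula will come from. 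Meanwhile, the factor $\prod_j(1+E_j)^{-1}$ encodes the poles along $Z$, and the key identification is that on $\Gamma_h$ the differential $dh$ gives a canonical section of $\pi_2^*\mathcal{O}(1)\otimes\pi_1^*\mathcal{O}(d-1)$ whose divisor is the proper transform of $Z$ together with the exceptional divisor of $\mathrm{Bl}_J\mathbb{P}^n\to\mathbb{P}^n$. This relates $\sum_j E_j$ on $\widetilde{Y}$ to the hyperplane class $H_2$ pulled back from the second factor of $\mathbb{P}^n\times\mathbb{P}^n$, and produces the bidegrees $\mu^i(h)$ upon pushforward via the basic identity $\pi_{1*}(H_2^i\cap[\Gamma_h])=\mu^i(h)\,H^i\cap[\mathbb{P}^n]$. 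Collecting terms in the basis $\{H^i(1+H)^{n-i}\}$ yields $\sum_{i=0}^n(-1)^i\mu^i(h)\,H^i(1+H)^{n-i}$, with the sign $(-1)^i$ arising from dualizing $\Omega^1$ to $T$.

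\textbf{Main obstacle.} The delicate point is reconciling the log Chern class computation on $\widetilde{Y}$ with the bidegrees of $\Gamma_h$: unless $Z$ has very mild singularities, the auxiliary blowup $\widetilde{Y}\to\Gamma_h$ will introduce further exceptional divisors, and one must show that their contribution to the log Chern class is absorbed cleanly after pushforward to $\Gamma_h$, while also tracking how the section $dh$ extends across them. An equivalent but arguably cleaner route, and the one taken in \cite{Aluffi1}, is to invoke Aluffi's formula expressing $c_{SM}(\mathbf{1}_Z)$ via the Segre class of $J$ in $\mathbb{P}^n$, and then to identify this Segre class with the bidegrees of $\Gamma_h$ through the isomorphism $\mathrm{Bl}_J\mathbb{P}^n\simeq\Gamma_h$ and the standard Segre-class-of-a-blowup formula; the equivalence of these two perspectives is the technical heart of the argument.
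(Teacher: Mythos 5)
First, a structural point: the paper does not prove this statement. It is quoted as a theorem of Aluffi and attributed to \cite[Theorem 2.1]{Aluffi1}, so there is no internal argument to compare yours against; the expected ``proof'' here is the citation.

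Judged on its own terms, your proposal is a roadmap with a genuine gap at its center. The scaffolding is right: $\Gamma_h\simeq\mathrm{Bl}_J\mathbb{P}^n$ via the first projection, an SNC model $\pi:\widetilde{Y}\to\mathbb{P}^n$ with $E=\pi^{-1}(Z)_{\mathrm{red}}$, the identity $c_{SM}(\mathbf{1}_{D(h)})=\pi_*\bigl(c(\Omega^1_{\widetilde{Y}}(\log E)^\vee)\cap[\widetilde{Y}]\bigr)$, the residue/Whitney reduction to $c(T\widetilde{Y})\prod_j(1+E_j)^{-1}$, and the correct extraction rule $\pi_{1*}(H_2^i\cap[\Gamma_h])=\mu^i(h)\,H^i\cap[\mathbb{P}^n]$. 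But the computation that converts the pushforward of the log Chern class into $\sum_i(-1)^i\mu^i(h)H^i(1+H)^{n-i}$ is never carried out: ``accounting for the exceptional divisors'' and ``their contribution is absorbed cleanly after pushforward'' are precisely the assertions that constitute the theorem, and your own ``Main obstacle'' paragraph concedes this and defers the technical heart to Aluffi's Segre-class formula. Two specific points would need repair even as an outline. (i) The asserted canonical section of $\pi_2^*\mathcal{O}(1)\otimes\pi_1^*\mathcal{O}(d-1)$ attached to $dh$ is not the link between $E$ and $H_2$; the standard relation on $\Gamma_h$ is $\pi_2^*\mathcal{O}(1)\simeq\pi_1^*\mathcal{O}(d-1)\otimes\mathcal{O}(-\mathcal{E})$, the defining property of the blowup of the Jacobian scheme $J$, while the hypersurface $Z$ enters through Euler's relation $d\cdot h=\sum_j z_j\,\partial h/\partial z_j$, which gives $J\subseteq Z$ and controls the multiplicity of $\mathcal{E}$ in $\pi_1^*Z$. (ii) The passage from the classes the log computation naturally produces to the basis $\{H^i(1+H)^{n-i}\}$ is exactly the ``twist by $\mathcal{O}(Z)$'' bookkeeping of Aluffi's calculus, where the signs and binomial coefficients are most easily lost, so ``collecting terms'' cannot be left implicit. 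As written, the proposal locates where the difficulty lives but does not resolve it; the honest course, and the one the paper takes, is to cite \cite{Aluffi1}.
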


Our goal is to show that almost all homogeneous polynomials $h$ with the given Newton polytope $\Delta_h$ define a projective hypersurface with the same Milnor numbers $\mu^i(h)$, and there is an explicit formula for computing these numbers from $\Delta_h$. This proves an assertion left unjustified in \cite[Example 17]{Huh1}.

Let us call a convex polytope $\Delta_*$ in $\mathbb{R}^{n+1}$ \emph{homogeneous} if it lies in an affine hyperplane perpendicular to the vector $(1,\ldots,1)$. 
We normalize the $n$-dimensional mixed volume for homogeneous polytopes so that the $n$-dimensional standard simplex $\Delta_\circ$ in $\mathbb{R}^{n+1}$ has unit volume. 

For a homogeneous convex polytope $\Delta_*$ in $\mathbb{R}^{n+1}$, define
\[
m_i(\Delta_*):=\text{MV}_n(\underbrace{\Delta_\circ,\ldots,\Delta_\circ}_{n-i},\underbrace{\Delta_*,\ldots,\Delta_*}_i) \quad \text{for} \ i=0,\ldots,n.
\]
Note that $m_0$ vanishes exactly when $\Delta_*$ is empty.
We extend \cite[D\'efinition 1.7]{Kouchnirenko} and define the \emph{mixed Newton numbers} of $\Delta_*$ by the formula
\[
\nu_i(\Delta_*)=V_{n,i}-V_{n-1,i-1}+\cdots+(-1)^i V_{n-i,0} \quad \text{for} \ i=0,\ldots,n,
\]
where $V_{k,l}$ is the sum of the $k$-dimensional mixed volumes $m_l$ of the intersections of $\Delta_*$ with all possible $(k+1)$-dimensional coordinate planes. 

A homogeneous polynomial is said to be \emph{nondegenerate} if its dehomogenization with respect to a variable is nondegenerate as a Laurent polynomial. Almost all homogeneous polynomials with the given Newton polytope are nondegenerate. 

\begin{corollary}\label{ProjectiveDegree}
If $h$ is a nondegenerate homogeneous polynomial in $(n+1)$ variables, then
\[
c_{SM}(\mathbf{1}_{D(h)}) = \sum_{i=0}^n (-1)^i \nu_i(\Delta_h) [\mathbb{P}^{n-i}] \in A_*(\mathbb{P}^n).
\]
In particular, the degree of the gradient map of $h$ is the sum
\[
\mu^n(h) = \sum_{i=0}^n \nu_i(\Delta_h).
\]
\end{corollary}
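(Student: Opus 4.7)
The plan is to combine the additivity of the CSM class with Theorem \ref{Kush2}, applied to the torus orbit stratification of $\mathbb{P}^n$. For each nonempty $S \subseteq \{0,1,\ldots,n\}$, let $O_S \subset \mathbb{P}^n$ denote the corresponding torus orbit, so that $O_S \cong (\mathbb{C}^*)^{|S|-1}$ and $\overline{O_S} \cong \mathbb{P}^{|S|-1}$, and let $h|_S$ be the polynomial obtained from $h$ by setting $z_i = 0$ for $i \notin S$. Its Newton polytope is the slice $\Delta_h \cap \mathbb{R}^S$. My first step would be to verify that nondegeneracy of $h$ descends to each $h|_S$: since $\Delta_h$ lies in the nonnegative orthant, $\mathbb{R}^S$ cuts out a face of $\Delta_h$, and every face of the slice is already a face of $\Delta_h$, so the nondegeneracy hypothesis on $h$ restricts from the ambient torus to the subtorus $O_S$ without loss.

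Next, additivity gives $c_{SM}(\mathbf{1}_{D(h)}) = \sum_S c_{SM}(\mathbf{1}_{D(h) \cap O_S})$. The stratum with $h|_S \equiv 0$ contributes nothing because $D(h) \cap O_S = \emptyset$. On a nontrivial stratum, one writes
\[
c_{SM}(\mathbf{1}_{D(h)\cap O_S}) \;=\; c_{SM}(\mathbf{1}_{O_S}) \;-\; c_{SM}(\mathbf{1}_{\{h|_S=0\} \cap O_S}).
\]
The first term equals $[\mathbb{P}^{|S|-1}]$ because the logarithmic cotangent bundle of $\overline{O_S}$ along its toric boundary is trivial. The second term is computed by applying Theorem \ref{Kush2} to the dehomogenization of $h|_S$ as a Laurent polynomial on $(\mathbb{C}^*)^{|S|-1}$; an affine isomorphism identifies its mixed volumes with the quantities $m_l(\Delta_h \cap \mathbb{R}^S)$ appearing in the definition of $\nu_i(\Delta_h)$. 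Combining these, with the convention $m_l(\emptyset) = 0$, gives the uniform formula
\[
c_{SM}(\mathbf{1}_{D(h)\cap O_S}) \;=\; \sum_{l=0}^{|S|-1} (-1)^l \, m_l(\Delta_h \cap \mathbb{R}^S) \, [\mathbb{P}^{|S|-1-l}].
\]

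Summing over $S$ and reindexing by $|S|-1 = n-a$ and $l = i-a$, the coefficient of $[\mathbb{P}^{n-i}]$ in $c_{SM}(\mathbf{1}_{D(h)})$ collects into $(-1)^i \sum_{a=0}^i (-1)^a V_{n-a,i-a} = (-1)^i \nu_i(\Delta_h)$, which is the first assertion. The formula $\mu^n(h) = \sum_i \nu_i(\Delta_h)$ then follows by comparison with Aluffi's expression $c_{SM}(\mathbf{1}_{D(h)}) = \sum_j (-1)^j \mu^j(h) H^j (1+H)^{n-j}$ recalled just before the corollary: equating $[\mathbb{P}^{n-i}]$-coefficients yields a unipotent binomial relation between the $\nu_i$ and the $\mu^j$, and the telescoping identity $\sum_i \nu_i(\Delta_h) = \sum_j \mu^j(h) \sum_{l \geq 0} (-1)^l \binom{n-j}{l}$ collapses to $\mu^n(h)$ because the inner alternating binomial sum vanishes unless $j = n$.

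The main obstacle I expect is the combinatorial matching of signs and indices when summing the stratum contributions, together with the careful handling of empty slices $\Delta_h \cap \mathbb{R}^S$ in the conventions for $m_l$ and for the mixed-volume sums $V_{k,l}$. Once these bookkeeping points are settled, the reduction to Theorem \ref{Kush2} on each stratum makes the computation essentially formal.
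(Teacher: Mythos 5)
Your proposal is correct and follows essentially the same route as the paper: decompose $D(h)$ along the torus orbit stratification of $\mathbb{P}^n$, observe that nondegeneracy descends to each coordinate restriction because every face of $\Delta_{h_I}$ is a face of $\Delta_h$, apply Theorem \ref{Kush2} on each stratum, and collect terms to obtain the $\nu_i(\Delta_h)$. Your explicit telescoping of the binomial relation to extract $\mu^n(h)=\sum_i\nu_i(\Delta_h)$ from Aluffi's formula fills in a step the paper leaves as an ``in particular,'' but the argument is the same.
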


\begin{proof}
Write $I$ for a proper subset of $\{0,1,\ldots,n\}$. We use the decomposition
\[
D(h)=\coprod_I D(h) \cap \mathbb{T}_I \quad \text{where} \ \mathbb{T}_I=\big\{(z_0:\cdots:z_n) \in \mathbb{P}^n \mid z_i=0 \ \text{for} \ i \in I\big\}.
\]
Let $h_I$ be the homogeneous polynomial obtained from $h$ by setting the variable $z_i$ to zero for $i \in I$. Since every face of the Newton polytope of $h_I$ is a face of the Newton polytope of $h$, the nondegeneracy of $h$ implies nondegeneracy of $h_I$. Therefore Theorem \ref{Kush2} computes the CSM class of $D(h) \cap \mathbb{T}_I$ in terms of $\Delta_{h_I}$ for every $I$. Collecting terms of the same dimension in the resulting expressions, we have
\[
c_{SM}(\mathbf{1}_{D(h)}) = \sum_I c_{SM}(\mathbf{1}_{D(h) \cap \mathbb{T}_I})=\sum_{i=0}^n (-1)^i \nu_i(\Delta_h) [\mathbb{P}^{n-i}]. 
\]
\end{proof}

This shows that many delicate examples discovered in classical projective geometry have a rather simple combinatorial origin. As an example, we consider Hesse's claim, which states in effect that the degree of the gradient map of a homogeneous polynomial is zero if and only if the corresponding projective hypersurface is a cone \cite{Hesse1,Hesse2}. Hesse's claim is true up to dimension $3$, but counterexamples were found by Gordan and Noether in dimension $4$ (see \cite{Gordan-Noether}). Here is a nondegenerate counterexample with given degree $d \ge 3$:
\[
h=z_3^{d-1} z_0+ z_3^{d-2}z_4z_1+z_4^{d-1} z_2.
\]
The actual values of the coefficients are irrelevant, so long as they are nonzero; all the polynomials define the same hypersurface up to a linear change of coordinates. 

\begin{remark}
Note that we do not have a hypothesis corresponding to \emph{commode} of \cite[D\'efinition 1.5]{Kouchnirenko}. No positivity property of the Newton numbers
should be expected in this generality. 
\end{remark}

\begin{example}
A homaloidal polynomial is a homogeneous polynomial $h$ whose gradient map is a birational transformation of $\mathbb{P}^n$. The property of being homaloidal depends only on the set $\{h=0\} \subseteq \mathbb{P}^n$ (see \cite{Dimca-Papadima}).
We check Corollary \ref{ProjectiveDegree} for three nondegenerate homaloidal plane curves.
\begin{enumerate}[1.]
\item The nonsingular conic $\{h=x^2+y^2+z^2=0\}\subset \mathbb{P}^2$: In this case,
\begin{eqnarray*}
\mu^2(h)&=&V_{2,0}+(V_{2,1}-V_{1,0})+(V_{2,2}-V_{1,1}+V_{0,0})\\
&=& 1+(2-3)+(4-6+3)\\
&=& 1.
\end{eqnarray*}
\item The union of three nonconcurrent lines $\{h=xyz=0\} \subset \mathbb{P}^2$: In this case,
\begin{eqnarray*}
\mu^2(h)&=&V_{2,0}+(V_{2,1}-V_{1,0})+(V_{2,2}-V_{1,1}+V_{0,0})\\
&=& 1+(0-0)+(0-0+0)\\
&=& 1.
\end{eqnarray*}
\item The union of the conic and its tangent $\{h=zy^2+x^2y=0\}\subset \mathbb{P}^2$: In this case,
\begin{eqnarray*}
\mu^2(h)&=&V_{2,0}+(V_{2,1}-V_{1,0})+(V_{2,2}-V_{1,1}+V_{0,0})\\
&=& 1+(2-2)+(0-0+0)\\
&=& 1.
\end{eqnarray*}
\end{enumerate}
Dolgachev proved in \cite{Dolgachev} that any homaloidal plane curve is equal to one of the above, up to a linear change of coordinates of $\mathbb{P}^2$. It would be interesting to extend Dolgachev's list and, as a first step, classify nondegenerate homaloidal surfaces in $\mathbb{P}^3$.
\end{example}

\begin{example}\label{homaloidal}
The modern study of homaloidal polynomials began with the work of Ein and Shepherd-Barron \cite{Ein-Shepherd-Barron}, as well as that of Etingof et al. \cite{EKP}, who observed that the relative invariant of a regular prehomogeneous vector space is homaloidal. These homaloidal polynomials have degree bounded in terms of the ambient dimension. 
In contrast to the case of relative invariants and that of plane curves, Ciliberto et al. showed that there are irreducible homaloidal hypersurfaces of given degree $d \ge 2n-3$ and ambient dimension $n \ge 3$ (see \cite[Theorem 3.13]{CRS}). Their construction relies on an ingenious argument concerning projective duals of rational scroll surfaces. 

Corollary \ref{ProjectiveDegree} provides a cheap way of producing projective hypersurfaces with interesting numerical properties (perhaps with the aid of a computer) and, in particular, homaloidal polynomials. Once the correct form of the equation is guessed, it is often not hard to check by hand that the example has the desired property. We demonstrate this by showing that there are irreducible homaloidal hypersurfaces of given degree $d \ge 3$ and ambient dimension $n \ge 3$. 

Among many explicit examples of this type, we present a particularly transparent one:
\[
h=z_0z_n^{d-1}+z_1z_2z_n^{d-2}+z_2^d+(z_3^2+z_4^2+\cdots+z_{n-1}^2)z_n^{d-2}.
\]
The actual values of the coefficients are irrelevant, so long as they are nonzero; all the polynomials define the same hypersurface up to a linear change of coordinates. By the Bertini theorem, $h$ is nondegenerate and irreducible. The system of equations
\[
y_0=\frac{\partial h}{\partial z_0}, \ldots, y_n=\frac{\partial h}{\partial z_n}
\]
is triangular (of De Jonqui\`eres type), 
defining an isomorphism between the affine spaces $z_n\neq 0$ and $y_0\neq 0$. Therefore $h$ is homaloidal for any $d \ge 3$ and $n \ge  3$. We note that Fassarella and Medeiros constructed (necessarily reducible) homaloidal hypersurfaces with given degree $d \ge 3$ and ambient dimension $n \ge 3$, using an idea based on the CSM class \cite[Example 5.1]{Fassarella-Medeiros}.
\end{example}

\begin{acknowledgements}
The author is grateful to Bernd Sturmfels for asking the questions which initiated this paper. He thanks Mircea Musta\c t\u a for useful discussions throughout the preparation of the manuscript. He also thanks Nero Budur, Eric Katz, and Sam Payne for helpful comments. He thanks the anonymous referee for reading the manuscript carefully and offering valuable suggestions.
\end{acknowledgements}

\end{document}